\let\SavedRightarrow=\Rightarrow
\let\Rightarrow=\SavedRightarrow
\newenvironment{pf}{\begin{proof}}{\end{proof}}
\newcommand{\RO}{\operatorname{RO}}
\newcommand{\CO}{\operatorname{Clop}}
\newcommand{\coZ}{\operatorname{coZ}}
\newcommand{\Aaa }{\mathcal A}
\newcommand{\Raa }{\mathcal R}
\newcommand{\Bee }{\mathcal B}
\newcommand{\Dee }{\mathcal D}
\newcommand{\Cee }{\mathcal C}
\newcommand{\Wee }{\mathcal W}
\newcommand{\Pee }{\mathcal P}
\newcommand{\Ess }{\mathcal S}
\newcommand{\w}{\operatorname{w}}
\newcommand{\low}{\operatorname{l}}
\newcommand{\up}{\operatorname{u}}
\newcommand{\cl}{\operatorname{cl}}
\renewcommand{\int}{\operatorname{int}}
\newcommand{\s}{\operatorname{s}}
\newcommand{\p}{\operatorname{p}}
\newcommand{\Ss}{\operatorname{\overline{s}}}
\newcommand{\liminv}{\varprojlim}
\newcommand{\FNS}{\operatorname{FNS}}
\newcommand{\FN}{\operatorname{FN}}
\newcommand{\St}{\operatorname{St}}
\newcommand{\Ult}{\operatorname{Ult}}
\newtheorem{theorem}{Theorem}
\newtheorem*{tw}{Theorem}
\newtheorem{corollary}{Corollary}
\newtheorem{lemma}{Lemma}
\newtheorem{question}{Question}
\newtheorem{proposition}[theorem]{Proposition}
\theoremstyle{definition}
\newtheorem{example}{Example}
\author{Judyta B\k{a}k} 
\address{Judyta B\k{a}k\\  Institute of Mathematics, Jan Kochanowski University, 
\'{S}wi\k{e}tokrzyska~15,
25-406 Kielce and Institute of Mathematics, University of Silesia \\  Bankowa 14, 40-007 Katowice\\ Poland} \email{jubak@us.edu.pl}
\author{Andrzej Kucharski} 
\address{Andrzej Kucharski \\  Institute of Mathematics, University of Silesia in Katowice\\  Bankowa 14, 40-007 Katowice\\ Poland} \email{andrzej.kucharski@us.edu.pl} 
\definecolor{RED}{rgb}{1,0,0}\definecolor{BLUE}{rgb}{0,0,1} 
\begin{document}

\begin{abstract}
The aim of this paper is to study the class of
 spaces with the FNS property and $\pi-\FNS$ property. We shown that compact spaces with the FNS property for some base consisting of cozero-sets are openly generated spaces and spaces with the $\pi-\FNS$ property are skeletally generated spaces.
\end{abstract}

\title{Topological spaces with the Freese--Nation property II} 
\subjclass[2000]{Primary: 54G20,  91A44; Secondary: 54F99}
\keywords{FNS property, inverse limit, openly generated space, skeletally generated space}

\maketitle
 
\section{Introduction}
This paper is a continuation of  the Freese-Nation property  research introduced in the  paper \cite{bk} for topological spaces. The Freese-Nation property was introduced by  R. Freese and J.B. Nation \cite{FN}. In \cite{bk} it is  shown that  spaces with the FNS property satisfy ccc and any product of such spaces also satisfies ccc. All metrizable spaces have the FN property.   L. Heindorf and L.B. Shapiro \cite{npba}  showed that a family of all clopen sets of  0-dimensional compact space $X$  has the FNS property if and only if $X$ is openly generated.  The concept of openly generated spaces was introduced by E.V. Shchepin in \cite{s76} and developed  in \cite{s79} and \cite{s81}. The FNS property and some versions of it for compact spaces were studied in  \cite{npba}, \cite{FKS}, \cite{FS}, \cite{Mi18}, \cite{Mi12}.

We say that a family $\Bee$ of open sets has \textit{$\FNS$ property} if there exists a map $\s:\Bee\to[\Bee]^{<\omega}$ such that if $U,V\in\Bee$ are disjoint then there are disjoint sets  $W_U, W_V\in s(U)
\cap s(V)$ such that $U\subseteq W_U, V\subset W_V$.

We say that a topological space $X$ has \textit{$\FNS$ property} if there exists a base which has $\FNS$ property. We say that a space $X$ has \textit{$\pi-\FNS$ property} if there exists a $\pi$-base which has $\FNS$ property.

A topological space has the $\FN$ \textit{property} if there exists a base $\mathcal{B}$  such that for every $V\in \mathcal{B}$ there are two finite sets $\up(V) \subseteq \{U\in \mathcal{B}:V\subseteq U\}$ and $\low(V) \subseteq \{U\in \mathcal{B}:U\subseteq V\}$ such that if $V\subseteq W$, then $\up(V)\cap \low(W)\ne \emptyset$.

A regular space with countable weight has the  $\FNS$ property and a regular space with countable $\pi$-weight has the  $\pi-\FNS$ property. We estabilish in Section 2 that every compact Hausdorff  space  with the $\FNS$ property for some base consisting of cozero-sets is openly generated. We prove that Player I in open-open game has winning strategy in every space with the  $\pi-\FNS$ property. It  follows from the result \cite{kp8} that every compact Hausdorff  space  with the $\pi-\FNS$ property is skeletally generated. In Section 3 we show that developable spaces have the FN property. We indicate examples of base without $\FNS$ property. Section 4 contains the proof that a   space $Y$ which is co-absolute to a  space $X$ with the $\pi-\FNS$ property  has the $\pi-\FNS$ property.

\section{Openly generated spaced and skeletally generared spaces}

We say that a compact  space $X$ is \textit{openly generated} if it is homeomorphic to $\liminv\{X_\sigma,p^{\sigma '}_\sigma,\Sigma\}$, where
 \begin{enumerate}
\item[(1)] $X_\sigma$ is a compact metrizable space for $\sigma \in \Sigma$,
\item[(2)]$p_\sigma^{\sigma '}\colon X_{\sigma '}\to X_\sigma$ is an open surjection,
\item[(3)] $\Sigma$ is $\sigma$--complete, i.e. for any chain $\{\sigma_n:n\in \omega\}\subseteq \Sigma$ there exists $\sigma=\sup\{\sigma_n, n\in \omega\}\in \Sigma$,
\item[(4)]the inverse system is continuous, i.e. $X_\sigma=\liminv\{X_{\sigma_n},p^{\sigma_{n+1}}_{\sigma_n},\omega\}$.
\end{enumerate}

We say that a map $f\colon X \to Y$ is \textit{skeletal} \cite{mr} if $\int \cl f(U)\ne \emptyset$ for every nonempty open set $U\subseteq X$. 

Similarly as openly generated space we define \textit{skeletally generated space}, by replacing open surjections $p_\sigma^{\sigma '}\colon X_{\sigma '}\to X_\sigma$ with skeletal surjections. Skeletally generated spaces were introduced in \cite{va}.

Let $X$ and $Y$ be a topological spaces. We say that a function $f\colon X\to Y$ is \textit{d-open} \cite{tkachenko} if $f(U)\subseteq \int \cl f(U)$ for every open set $U\subseteq X$.

\begin{lemma}[\cite{tkachenko}]\label{d-open} 
Let $f\colon X\to Y$ and $f(U)\subseteq \int \cl f(U)$ for every set $U\in \mathcal{B}$, where $\mathcal{B}$ is a base of regular space $X$, a map $f$ is closed, then $f$ is open.
\end{lemma}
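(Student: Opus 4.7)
The plan is to prove openness by fixing an arbitrary open set $W \subseteq X$ and showing that every point $y \in f(W)$ has an open neighborhood in $Y$ contained in $f(W)$. Pick $x \in W$ with $f(x) = y$. Using regularity of $X$ together with the fact that $\mathcal{B}$ is a base, I would produce some $U \in \mathcal{B}$ with $x \in U$ and $\cl U \subseteq W$: regularity gives an open set around $x$ whose closure is inside $W$, and a base refinement sits inside it. Applying the hypothesis to this $U$ yields $y = f(x) \in f(U) \subseteq \int \cl f(U)$, so $V := \int \cl f(U)$ is an open neighborhood of $y$ in $Y$.

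The remaining task is to check $V \subseteq f(W)$, and this is where closedness of $f$ enters. Since $f$ is closed and $\cl U$ is closed, the image $f(\cl U)$ is a closed subset of $Y$ containing $f(U)$, so $\cl f(U) \subseteq f(\cl U)$. Combining the inclusions,
\[
V \;=\; \int \cl f(U) \;\subseteq\; \cl f(U) \;\subseteq\; f(\cl U) \;\subseteq\; f(W),
\]
the last step using $\cl U \subseteq W$. Since $y$ was arbitrary, $f(W)$ is open, and $f$ is an open map.

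I do not expect any serious obstacle here. The only subtlety is that the hypothesis is available only for members of $\mathcal{B}$, not for all open sets, which is precisely what forces the preliminary regularity argument to refine the neighborhood of $x$ to a base element whose closure still lies in $W$; everything else is automatic from the closed-map condition.
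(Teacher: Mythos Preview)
Your argument is correct. The paper does not supply its own proof of this lemma; it is simply cited from \cite{tkachenko}, so there is nothing to compare against directly. Your approach---refining via regularity to a base element $U$ with $\cl U \subseteq W$, then using closedness of $f$ to get $\cl f(U) \subseteq f(\cl U) \subseteq f(W)$---is the standard one and goes through without difficulty.
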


\begin{lemma}[\cite{kpv}]\label{KPV} For any map $f\colon X \to Y$ the following conditions are equivalent:
\begin{itemize}
\item[(1)] f is d-open,
\item[(2)]\label{2} there exists a base $\mathcal{B}$ of a space $Y$ such that a family $\mathcal{P}=\{f^{-1}(V ) : V \in \mathcal{B}\}$ satisfies the following condition: for every family $\mathcal{S}\subseteq \mathcal{P}$ and a point $x\notin\cl\bigcup \mathcal{S}$, there is an open neighborhood $W\in \mathcal{P}$ of $x$ such that $W\cap \bigcup \mathcal{S}=\emptyset$,

\end{itemize}
\end{lemma}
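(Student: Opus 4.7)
The plan is to prove the two implications separately, translating between the topological condition defining d-openness and the combinatorial condition (2) on preimages of basic open sets.

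For $(1)\Rightarrow(2)$, I would take $\mathcal{B}$ to be an arbitrary base of $Y$. Given $\mathcal{S}\subseteq\mathcal{P}$, write its elements as $f^{-1}(V_\alpha)$ with $V_\alpha\in\mathcal{B}$, and suppose $x\notin\cl\bigcup\mathcal{S}$. Fix an open neighborhood $U$ of $x$ in $X$ with $U\cap\bigcup\mathcal{S}=\emptyset$, which immediately gives $f(U)\cap V_\alpha=\emptyset$ for every $\alpha$. Since each $V_\alpha$ is open, this upgrades to $V_\alpha\cap\cl f(U)=\emptyset$ (an open set disjoint from a set is disjoint from its closure). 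By d-openness, $f(x)\in f(U)\subseteq\int\cl f(U)$, so $\int\cl f(U)$ is an open neighborhood of $f(x)$ disjoint from each $V_\alpha$. Refining inside the base $\mathcal{B}$ yields $V\in\mathcal{B}$ with $f(x)\in V\subseteq\int\cl f(U)$, and then $W=f^{-1}(V)\in\mathcal{P}$ is the required neighborhood of $x$.

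For $(2)\Rightarrow(1)$, fix an open set $U\subseteq X$ and $x\in U$; the goal is $f(x)\in\int\cl f(U)$. Set $\mathcal{S}=\{f^{-1}(V):V\in\mathcal{B},\,V\cap f(U)=\emptyset\}$. Because $\mathcal{B}$ is a base of $Y$, the union $\bigcup\{V\in\mathcal{B}:V\cap f(U)=\emptyset\}$ equals $Y\setminus\cl f(U)$, so $\bigcup\mathcal{S}=f^{-1}(Y\setminus\cl f(U))$. The set $U$ itself is an open neighborhood of $x$ disjoint from $\bigcup\mathcal{S}$, witnessing $x\notin\cl\bigcup\mathcal{S}$. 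Applying (2) produces $W=f^{-1}(V_0)$ with $x\in W$ and $W\cap\bigcup\mathcal{S}=\emptyset$; unpacking, this says $V_0\cap f(X)\subseteq\cl f(U)$. Since $f$ is (implicitly) surjective in the setting of the lemma, this yields $V_0\subseteq\cl f(U)$, so $V_0$ is an open neighborhood of $f(x)$ inside $\cl f(U)$, giving $f(x)\in\int\cl f(U)$.

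The main obstacle lies in the last step of $(2)\Rightarrow(1)$: passing from $V_0\cap f(X)\subseteq\cl f(U)$ to $V_0\subseteq\cl f(U)$ requires some form of surjectivity or dense-range assumption on $f$ (otherwise an open piece of $V_0$ outside $f(X)$ could escape $\cl f(U)$, and small counterexamples are easy to produce). This is harmless in the context in which the lemma is invoked—continuous surjections, typically projections in inverse systems—but a fully general statement would want ``$f$ has dense image'' explicitly, or else one replaces $Y$ by $\cl f(X)$ throughout. Everything else is a routine bookkeeping with the fact that $\mathcal{B}$ is a base.
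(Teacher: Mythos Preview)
The paper does not prove this lemma at all: it is quoted from the external reference \cite{kpv} and used as a black box in the proof of Theorem~\ref{pog}. So there is no ``paper's own proof'' to compare against.

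On its own merits your argument is sound. The $(1)\Rightarrow(2)$ direction is clean: disjointness of $U$ from each $f^{-1}(V_\alpha)$ gives $f(U)\cap V_\alpha=\emptyset$, openness of $V_\alpha$ pushes this to $\cl f(U)\cap V_\alpha=\emptyset$, and d-openness then provides the basic neighborhood of $f(x)$ inside $\cl f(U)$ whose preimage is the required $W$. For $(2)\Rightarrow(1)$ your choice of $\mathcal{S}$ is the natural one, and the computation $\bigcup\mathcal{S}=f^{-1}(Y\setminus\cl f(U))$ is correct; the only genuine issue is exactly the one you flag, namely that from $f^{-1}(V_0\setminus\cl f(U))=\emptyset$ you can only conclude $V_0\cap f(X)\subseteq\cl f(U)$, so some density of the image is needed to finish. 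In the paper the lemma is applied only to quotient maps $q_{\mathcal{P}}\colon X\to X/\mathcal{P}$, which are surjective by construction, so the hypothesis is harmless in context. Your diagnosis and the suggested fixes (assume dense range, or replace $Y$ by $\cl f(X)$) are both correct.
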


Let $\mathcal{P}$ be a family of open subsets of a space $X$. We introduce an equivalence relation $\sim_\mathcal{P}$ on a space $X$ in the following way
$$x\sim_\mathcal{P} y \quad \Longleftrightarrow \quad (\forall_{V\in \mathcal{P}} \quad x\in V \Leftrightarrow y\in V).$$
Let $X/ \mathcal{P}$ be a set of all equivalence classes with the quotient topology and let $q_\mathcal{P}\colon X\to X/\mathcal{P}$ denote a quotient map (see \cite{eng}).

Following \cite{ker} (see also \cite{bran-mys}) let us call a family $\Pee$ of open subsets of a spaces $X$ \textit{completely regular }if for each $U\in\Pee$ there exist two  sequences $\{V_n: n\in\omega\}$ and $\{U_n: n\in\omega\}$ in $\Pee$ such that
\begin{equation*}
U=\bigcup_{n\in\omega} U_n \text{ and }U_n\subseteq X\setminus V_n\subseteq U\text{ for each }n\in\omega.
\end{equation*}

J. Kerstan has proved the following characterization of complete regularity.

\begin{tw}[\cite{ker},(see also \cite{bran-mys})]\label{ker}
	A $T_1$-space is completely regular if and only if it has a completely regular base.
\end{tw}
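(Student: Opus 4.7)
The plan is to handle the two implications separately. For the ``only if'' direction, if $X$ is $T_1$ and completely regular (i.e.\ Tychonoff), I take $\Pee$ to be the standard cozero-set base. Given $U=\{f>0\}\in\Pee$ for a continuous $f\colon X\to[0,\infty)$, set $U_n=\{f>1/n\}$ and $V_n=\{f<1/(n+1)\}$. Both are cozero sets (witnessed by $\max(0,f-1/n)$ and $\max(0,1/(n+1)-f)$), hence they lie in $\Pee$. A direct verification shows $U=\bigcup_n U_n$, $U_n\cap V_n=\emptyset$, and $X\setminus U\subseteq V_n$, so $U_n\subseteq X\setminus V_n\subseteq U$. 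Thus $\Pee$ is a completely regular base.

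For the converse, let $\Pee$ be a completely regular base of the $T_1$-space $X$. The strategy is to show that every $U\in\Pee$ is a cozero set; since $\Pee$ is a base, this yields a cozero base, which is well-known to be equivalent to $X$ being completely regular. Fix $U\in\Pee$. The goal is to build a continuous $f\colon X\to[0,1]$ with $U=\{f>0\}$.

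I will construct a Urysohn-type family $\{W_r : r\in D\}\subseteq\Pee$, indexed by the dyadic rationals $D\subseteq(0,1)$, satisfying $r<s\Rightarrow\overline{W_r}\subseteq W_s$ and $\bigcup_{r\in D}W_r=U$; then $f(y)=\sup\{r\in D : y\in W_r\}$ (with $f=0$ outside $U$) is continuous and has cozero set exactly $U$. The initial seed is the decomposition $U=\bigcup_n U_n$ provided by the completely regular property of $\Pee$, together with the witnesses $V_n\in\Pee$ with $X\setminus V_n\subseteq U$, which gives $\overline{U_n}\subseteq X\setminus V_n\subseteq U$ for free. This is the basic ``interpolation'' between $U_n$ and $U$.

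The main obstacle is the inductive step. Given a dyadic pair $r<s$ with $\overline{W_r}\subseteq W_s$, I need $W_t\in\Pee$ with $\overline{W_r}\subseteq W_t\subseteq\overline{W_t}\subseteq W_s$. The crucial point is that the witnesses $V_{s,n}$ produced by applying the completely regular property to $W_s$ themselves belong to $\Pee$, so the property can be applied again to them, allowing the recursion to continue indefinitely; this is exactly the extra strength over mere regularity. The technical work is to refine the decompositions so that $\overline{W_r}$ is swallowed by a finite union of elements from the decomposition of $W_s$, and then absorb that finite union into a single member of $\Pee$ — here one exploits that $\Pee$ is a base, so any open neighborhood of the compact-in-spirit set $\overline{W_r}$ obtained from the construction can be shrunk to a base element. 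Carrying out this dyadic iteration produces the family $\{W_r\}$ and hence the desired continuous $f$, completing the proof that $U$ is a cozero set and so that $X$ is completely regular.
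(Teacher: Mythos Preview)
The paper does not give its own proof of this theorem; it is quoted from Kerstan (with the Brandenburg--Mysior reference for a short proof).  The closest thing in the paper is the proof of the subsequent Corollary (the ``weakly completely regular base'' version), which proceeds by an entirely different route: one extracts a countable weakly completely regular subfamily $\Pee$ containing the given $U$, applies Theorem~\ref{metr} to see that $X/\Pee$ is metrizable, and then pulls back the fact that $q[U]$ is a cozero set in a metric space via $q^{-1}(q[U])=U$.  So there is no direct Urysohn-scale construction in the paper at all.

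Your ``only if'' direction is correct and standard.

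Your ``if'' direction, however, has a real gap in the inductive step.  You want, given $W_r,W_s\in\Pee$ with $\overline{W_r}\subseteq W_s$, to produce $W_t\in\Pee$ with $\overline{W_r}\subseteq W_t\subseteq\overline{W_t}\subseteq W_s$.  Decomposing $W_s=\bigcup_n U_{s,n}$ with $\overline{U_{s,n}}\subseteq W_s$ does not help: without compactness there is no reason $\overline{W_r}$ is contained in any finite subunion $\bigcup_{n\le N}U_{s,n}$, and your phrase ``compact-in-spirit'' is precisely where the argument breaks.  Even granting such a finite cover, the further step ``absorb that finite union into a single member of $\Pee$'' also fails: $\Pee$ is only a base, so each \emph{point} of $\overline{W_r}$ has a $\Pee$-neighbourhood inside the finite union, but nothing forces a single element of $\Pee$ to swallow all of $\overline{W_r}$.

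The repair is to stop trying to interpolate between an arbitrary closed set and an element of $\Pee$, and instead carry along the complementary witnesses.  At every stage the completely regular base hands you a \emph{pair} $U_n,V_n\in\Pee$ with $U_n\cap V_n=\emptyset$, $U_n\subseteq U$, and $X\setminus U\subseteq V_n$; since $V_n\in\Pee$ you can decompose $V_n$ in the same way.  Iterating on both sides simultaneously (rather than only on the $W_s$ side) is what drives the dyadic construction without any compactness hypothesis.  Alternatively, and more in the spirit of this paper, you can bypass the Urysohn scale entirely by the quotient argument sketched above.
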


We say that a family $\Pee$ of open subsets of a spaces $X$ \textit{weakly completely regular } if for each ${k\in \omega}$ and any  $U_1,\ldots , U_k\in\Pee$ there exist two  sequences $\{\mathcal{A}_n:n\in \omega\},$ $\{\mathcal{B}_n:n\in \omega\}\subseteq [\mathcal{P}]^{<\omega} $ such that
\begin{multline*}
U_1\cap \ldots \cap U_k=\bigcup \{ \bigcup \mathcal{A}_n: n\in \omega\}= \bigcup \{X \setminus \bigcup \mathcal{B}_n: n\in \omega\} \\\text { and } \bigcup \mathcal{A}_n\subseteq X \setminus \bigcup \mathcal{B}_n \text{ for each } n\in \omega \tag{$*$}
\end{multline*} 
 
 If a  space $X$ has a completely regular family $\Pee$ then not so hard to see that $\Pee'$ closed under the finite intersection and unions of elements of $\Pee$ is  completely regular family. Then $\Pee'$ is weakly completely regular family. Next theorem is similar to Theorem 2.16. \cite{kt19} but we assume only that family $\Pee$ is a countable and weakly completely regular.

\begin{theorem}\label{metr}
	If 	$\Pee$ is a countable and weakly completely regular family of  $X,$ then $X_\Pee$ is metrizable.
\end{theorem}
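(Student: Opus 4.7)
The plan is to verify that $X_\Pee$ is Hausdorff, regular, and second countable, and then invoke Urysohn's metrization theorem. The candidate countable base is
\[
\mathcal{B} := \{q_\Pee(U_{i_1} \cap \cdots \cap U_{i_k}) : k \in \omega,\; U_{i_j} \in \Pee\},
\]
which is countable since $\Pee$ is, and whose elements are open in $X_\Pee$ because each $U_{i_1} \cap \cdots \cap U_{i_k}$ is a saturated open subset of $X$ (every $U \in \Pee$ is a union of $\sim_\Pee$-equivalence classes by the very definition of the relation).

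For Hausdorffness, given $[x] \neq [y]$ I would pick $U \in \Pee$ with $x \in U$ and $y \notin U$, and apply $(*)$ with $k = 1$ to $U$. From $x \in U = \bigcup_n \bigcup \mathcal{A}_n$ one extracts $n$ and $W \in \mathcal{A}_n$ with $x \in W$, and from $y \notin X \setminus \bigcup \mathcal{B}_n$ one extracts $V \in \mathcal{B}_n$ with $y \in V$. The chain $W \subseteq \bigcup \mathcal{A}_n \subseteq X \setminus \bigcup \mathcal{B}_n \subseteq X \setminus V$ forces $W \cap V = \emptyset$, so $q_\Pee(W)$ and $q_\Pee(V)$ are disjoint open neighborhoods separating $[x]$ and $[y]$.

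For regularity, given a closed set $F \subseteq X_\Pee$ with $[x] \notin F$, I would use $\mathcal{B}$ as a base to find $U := U_{i_1} \cap \cdots \cap U_{i_k}$ with $x \in U$ and $q_\Pee(U) \cap F = \emptyset$; saturation of $U$ then gives $q_\Pee^{-1}(F) \subseteq X \setminus U = \bigcap_m \bigcup \mathcal{B}_m$. Applying $(*)$ to $U$ and choosing $n$ together with $W \in \mathcal{A}_n$ satisfying $x \in W$, one obtains $q_\Pee^{-1}(F) \subseteq \bigcup \mathcal{B}_n$ while $W \cap \bigcup \mathcal{B}_n = \emptyset$. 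The images $q_\Pee(W)$ and $q_\Pee(\bigcup \mathcal{B}_n)$ are then disjoint open neighborhoods of $[x]$ and $F$.

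The main obstacle will be verifying that $\mathcal{B}$ is genuinely a base for the quotient topology on $X_\Pee$, and not merely a countable family of open sets. The quotient topology a priori depends on all open subsets of $X$, whereas $\mathcal{B}$ only reflects the family $\Pee$; the WCR condition $(*)$ must be used, possibly iteratively, to show that every saturated open subset of $X$ containing a point $x$ admits a finite intersection of $\Pee$-sets sandwiched between $x$ and the set. Once this base-verification is in hand, the two separation arguments above establish $T_3$ and second countability, and Urysohn's theorem yields metrizability.
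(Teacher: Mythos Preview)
Your approach coincides with the paper's: exhibit a countable base of images, verify Hausdorffness and regularity via $(*)$ exactly as you outline, and apply Urysohn. The paper uses $\{q_\Pee[V]:V\in\Pee\}$ rather than your finite-intersection family $\mathcal B$, but since $(*)$ expresses every finite intersection of $\Pee$-sets as a union of $\Pee$-sets these two families generate the same topology; the paper's extra observation $q_\Pee[V\cap W]=q_\Pee[V]\cap q_\Pee[W]$ is what lets it stay with single sets, but this is cosmetic. Your Hausdorff and regularity arguments match the paper's essentially verbatim.

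Regarding the obstacle you flag: the paper does not resolve it either. It proves only that $\{q_\Pee[V]:V\in\Pee\}$ covers $X/\Pee$ and refines its own pairwise intersections, i.e.\ that it is a base for the topology it \emph{generates}; it then separately proves that $q_\Pee$ is continuous for this topology---a statement that would be vacuous for the genuine quotient map. In effect the paper equips $X_\Pee$ with the topology generated by $\{q_\Pee[V]:V\in\Pee\}$, which may well be strictly coarser than the quotient topology, and this is all that is used downstream (in Theorem~\ref{pog} one needs only a continuous surjection from $X$ onto a metrizable space, and continuity holds for the coarser topology). So do not expect to close this gap by iterating $(*)$: it cannot be closed in general without further hypotheses on $\Pee$, and the paper proceeds without doing so.
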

\begin{proof}
We will show that a  map $q_\mathcal{P}\colon X\to X/\mathcal{P}$ is continuous. First we check that $\{q_\mathcal{P}[V]:V\in 
\mathcal{P}\}$ is a base of the space $X/\mathcal{P}$. To show that $\bigcup \mathcal{P}=X,$ take any $x\in X$ and $W\in \mathcal{P}$. We 
are not losing generality assuming that $x\notin W.$ Since $\Pee$ is weakly completely regular there is a sequence $\{\mathcal{B}_n:n\in \omega\}\subseteq [\mathcal{P}]^{<\omega} $ such that $W=\bigcup \{X \setminus \bigcup 
\mathcal{B}_n: n\in \omega\}.$ Hence we get  $x\in \bigcup 
\mathcal{B}_0\subseteq \bigcup \mathcal{P}$. This proves that  $\bigcup \{q[V]:V\in \mathcal{P}\}=X/\mathcal{P}$.
Now we shall prove that 
$$q[V\cap W]=q[V]\cap q[W]\text{ for all }U,W\in \mathcal{P}.$$
 Take any $V, W\in\Pee.$ Clearly $q[V\cap W]\subseteq q[V]\cap q[W]$. Take an arbitrary  $q[x]\in q[V]\cap 
q[W]$, then there is $y\in V$ such that $q[x]=q[y]$. By the definition of the relation $\sim_\mathcal{P}$ we get  $x\in V.$  The same reasoning shows that $x\in W$ so we proved that $q[x]\in q[V\cap W]$. Since $\Pee$ is the weakly completely regular family there is a sequence $\{\mathcal{A}_n:n\in \omega\}\subseteq [\mathcal{P}]^{<\omega} $ such that $V\cap W=\bigcup\{ \bigcup \mathcal{A}_n:n\in \omega\}.$  Therefore  there exists $U\in \bigcup \{\mathcal{A}_n:n\in \omega\}$ such that $q[x]\in q[U] \subseteq q[V\cap W]=q[V]\cap q[W]$. This proves that  the family $\{q[V]:V\in 
\mathcal{P}\}$ is the base of the space $X/\mathcal{P}$. 

	The map $q$ is continuous, because $q^{-1}(q[V])=V$ for all $V\in \mathcal{P}$.
To show that the space $X/\mathcal{P}$ is Hausdorff space, take $q[x]\ne q[y].$ There exists $W\in 
\mathcal{P}$ such that $x\in W$ and $y\notin W$. Since $\Pee$ is the weakly completely regular family there are  sequences $\{\mathcal{A}_n:n\in \omega\},\{\mathcal{B}_n:n\in \omega\}\subseteq [\mathcal{P}]^{<\omega}$ which satisfy condition $(*)$ for the set $W.$ There is $n\in \omega$ such that $x\in \bigcup \mathcal{A}_n \subseteq X\setminus 
\bigcup \mathcal{B}_n\subseteq W$ and $y\notin X\setminus  \bigcup \mathcal{B}_n$. Therefore there are neighborhoods $U_x\in 
\mathcal{A}_n\subseteq \mathcal{P}$ of the point $x$ and $U_y\in \mathcal{B}_n\subseteq \mathcal{P}$ of the point $y$. Since $\bigcup 
\mathcal{A}_n \cap \bigcup \mathcal{B}_n = \emptyset$ we have  $q[U_x]\cap q[U_y]=q[U_x\cap U_y]=\emptyset$ and 
$q[x]\in q[U_x],$ $q[y]\in q[U_y]$. 

We shall prove that $X/\mathcal{P}$ is regular. Take any closed set $F\subseteq X/\mathcal{P}$ and a point $q(x)\not\in F$. Since $\{q_\mathcal{P}[V]:V\in \mathcal{P}\}$ is a base of the space $X/\mathcal{P}$ there is $V\in\Pee$ such that $q(x)\in q[V]$ and $q[V]\cap F=\emptyset.$ By the property $(*)$ of $\Pee$ there are sequences $\{\mathcal{A}_n:n\in \omega\},\{\mathcal{B}_n:n\in \omega\}\subseteq [\mathcal{P}]^{<\omega}$ which satisfy condition $(*)$ for the set $V.$ So there is $n\in\omega$ such that $q(x)\in q[U]\in\{q[G]:G\in\mathcal{A}_n\}$ and $X/\mathcal{P}\setminus \bigcup\{q[G]:G\in\mathcal B_n\}\subseteq X/\mathcal{P}\setminus F.$
Since $\bigcup \mathcal{A}_n\subseteq X \setminus \bigcup \mathcal{B}_n$
the space $X/\mathcal{P}$ is regular.	
By the Urysohn metrization theorem $X/\mathcal{P}$  is metrizable. 
\end{proof}

We get a similar characterization of complete regularity by the property $(*)$ obtained by J. Kerstan \cite{ker} ( see also \cite{bran-mys}).

\begin{corollary}
	A $T_0$-space is completely regular if and only if it has a weakly completely regular base.
\end{corollary}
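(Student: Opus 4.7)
The corollary is a variant of Kerstan's Theorem \ref{ker} phrased in terms of weakly completely regular bases. For the direction $(\Leftarrow)$, if $X$ is completely regular, Kerstan's theorem yields a completely regular base $\Bee$, and the remark preceding Theorem \ref{metr} notes that its closure $\Bee'$ under finite unions and intersections is still completely regular, hence weakly completely regular in the sense of $(*)$; $\Bee'$ is plainly still a base.

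For $(\Rightarrow)$, suppose $X$ is $T_0$ with weakly completely regular base $\Pee$. I would first observe that $X$ is in fact Hausdorff: given $x\ne y$, $T_0$ yields $U\in\Pee$ containing, say, $x$ but not $y$; applying $(*)$ with $k=1$ to $U$ one can pick $n$ with $x\in\bigcup\Aaa_n\subseteq X\setminus\bigcup\Bee_n\subseteq U$. Since $y\notin U$ we get $y\in\bigcup\Bee_n$, and $\bigcup\Aaa_n\cap\bigcup\Bee_n=\emptyset$, so a member of $\Aaa_n$ containing $x$ and a member of $\Bee_n$ containing $y$ are disjoint open neighbourhoods drawn from $\Pee$. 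Next, given a point $x$ and a closed set $F\not\ni x$, pick $U\in\Pee$ with $x\in U\subseteq X\setminus F$ and construct a countable subfamily $\Pee_0\subseteq\Pee$ which contains $U$ and is itself weakly completely regular. This is a standard closure argument: start from $\{U\}$, and at each stage, for every finite subset of the family built so far, adjoin all members of some chosen witnessing sequences $\{\Aaa_n\},\{\Bee_n\}\subseteq[\Pee]^{<\omega}$ for $(*)$; after $\omega$ stages $\Pee_0$ is countable and weakly completely regular as a family in $X$.

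Now Theorem \ref{metr} applies to $\Pee_0$, so $Y:=X/\Pee_0$ is metrizable and hence completely regular. The quotient map $q:=q_{\Pee_0}\colon X\to Y$ is continuous and satisfies $q^{-1}(q[U])=U$ (as shown in the proof of Theorem \ref{metr}), so $q[U]$ is an open neighbourhood of $q[x]$ in $Y$ disjoint from $q[F]$. Complete regularity of $Y$ supplies a continuous $g\colon Y\to[0,1]$ with $g(q[x])=0$ and $g\equiv 1$ on $Y\setminus q[U]$, and then $f:=g\circ q\colon X\to[0,1]$ witnesses complete regularity of $X$ for the pair $(x,F)$. The main obstacle is the closure construction for $\Pee_0$: one must adjoin witnesses for every finite subset at every stage while keeping the family countable; once $\Pee_0$ is in hand, Theorem \ref{metr} does the heavy lifting by producing a Tychonoff quotient onto which the separation problem is transported.
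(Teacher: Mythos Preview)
Your proof is correct and follows essentially the same route as the paper: for the nontrivial direction you pass to a countable weakly completely regular subfamily, invoke Theorem~\ref{metr} to get a metrizable quotient, and pull back the separation through the continuous map $q$. Two cosmetic remarks: your arrows $(\Leftarrow)$ and $(\Rightarrow)$ are swapped relative to the statement as written; and the paper streamlines both directions slightly---for the forward implication it simply observes that the family of all cozero-sets is weakly completely regular (no appeal to Kerstan needed), and for the converse it notes that $U=q^{-1}(q[U])$ with $q[U]$ open in a metrizable space makes $U$ itself a cozero-set, which already yields complete regularity without the explicit point--closed-set argument or the Hausdorff detour.
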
 
\begin{proof}Clearly the family of all cozero-set of a space is weakly comletely regular. 
Let $\Bee$ be a weakly completely regular base for a space $X$. To show that $X$ is completely regular take any $U\in\Bee$. One can assume that there is  a countable subfamily  $\Pee\subseteq \Bee$ such that $\Pee$ is  weakly completely regular and $U\in\Pee$. By Theorem \ref{metr} $X/\mathcal{P}$  is metrizable. Since $q^{-1}(q[V])=V$ for all  $V\in \mathcal{P}$ and $q[U]$ is an open set in the metrizable space $X/\mathcal{P}$,   $U$ is a cozero-set, this completes the proof.
\end{proof}
\begin{theorem}\label{pog}
Every compact Hausdorff  space  with the $\FNS$ property for some base consisting of cozero-sets is openly generated.
\end{theorem}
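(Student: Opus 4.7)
My strategy is to present $X$ as the limit of a $\sigma$-complete continuous inverse system $\{X_\sigma,p_\sigma^{\sigma'},\Sigma\}$ of compact metrizable spaces with open bonding surjections. Let $\mathcal{B}$ be the given FNS base of cozero-sets, with witness $\s:\mathcal{B}\to[\mathcal{B}]^{<\omega}$, and let $\Sigma$ be the directed set of all countable $\sigma\subseteq\mathcal{B}$ that are closed under $\s$ and weakly completely regular, ordered by inclusion. A standard closing-off argument makes $\Sigma$ cofinal and $\sigma$-complete; the cozero hypothesis on $\mathcal{B}$ is essential here, because writing $V=\{f>0\}$ and covering the compact level sets $\{f\ge 1/n\}$ and the compact set $X\setminus V$ by finitely many members of $\mathcal{B}$ trapped between appropriate level sets exhibits weak-complete-regularity witnesses of $V$ inside $\mathcal{B}$ itself.

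For $\sigma\in\Sigma$ put $X_\sigma:=X/{\sim_\sigma}$ with quotient $q_\sigma$. Theorem~\ref{metr} gives a metrizable $X_\sigma$, and compactness of $X$ promotes this to compact metrizable. For $\sigma\subseteq\sigma'$ the canonical map $p_\sigma^{\sigma'}:X_{\sigma'}\to X_\sigma$ is a continuous surjection, and is closed by compactness. Continuity of the system at countable chains reflects $\sim_{\bigcup_n\sigma_n}=\bigcap_n\sim_{\sigma_n}$, and the homeomorphism $X\cong\varprojlim X_\sigma$ follows because $\bigcup\Sigma=\mathcal{B}$ separates points in the Hausdorff space $X$: the natural map is a continuous injection with compact dense image into the inverse limit.

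The heart of the proof is openness of each $p_\sigma^{\sigma'}$. By Lemma~\ref{d-open} it is enough to prove $d$-openness, and by Lemma~\ref{KPV} applied with the base $\{q_\sigma[V]:V\in\widehat\sigma\}$ of $X_\sigma$ (where $\widehat\sigma$ denotes the closure of $\sigma$ under finite intersections) this reduces to the following: for every $\mathcal{T}\subseteq\widehat\sigma$ and every $y=q_{\sigma'}(x)\notin\overline{\bigcup_{V\in\mathcal{T}}q_{\sigma'}[V]}$, there is $W\in\widehat\sigma$ with $x\in W$ and $W\cap V=\emptyset$ for every $V\in\mathcal{T}$. Unravelling the closure yields $W'\in\sigma'$ with $x\in W'$ and $W'\cap V=\emptyset$ for each $V\in\mathcal{T}$. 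For each such $V=V_1\cap\cdots\cap V_k\in\widehat\sigma$, weak complete regularity of $\sigma$ produces $\mathcal{A}^V_n\subseteq[\sigma]^{<\omega}$ with $V=\bigcup_n\bigcup\mathcal{A}^V_n$, so $W'\cap U=\emptyset$ for every $U$ occurring in some $\mathcal{A}^V_n$. Applying the FNS property in $\mathcal{B}$ to each disjoint pair $W',U$ produces $W'_U\in\s(W')\cap\s(U)$ with $W'\subseteq W'_U$ and $W'_U\cap U=\emptyset$. Since $\sigma$ is $\s$-closed and $U\in\sigma$ we have $W'_U\in\sigma$, and since all the $W'_U$ (across every $V$ and every $U$) lie in the \emph{finite} set $\s(W')$, only finitely many of them are distinct. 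Hence $W:=\bigcap W'_U\in\widehat\sigma$ is a finite intersection of $\sigma$-elements that satisfies $x\in W$ and $W\cap V=\emptyset$ for every $V\in\mathcal{T}$, as required.

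The main obstacle is the interplay between finite-intersection closure, weak complete regularity, and FNS: the FNS property is defined only on $\mathcal{B}$, while the lifting condition naturally involves finite intersections living in $\widehat\sigma$ rather than $\sigma$. The cozero hypothesis resolves this by guaranteeing enough weak-complete-regularity witnesses inside $\mathcal{B}$ to split intersections back into $\sigma$-pieces, and the finiteness of each $\s(W')$ then collapses infinitely many pairwise separations into a single finite intersection.
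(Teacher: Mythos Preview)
Your proof is correct and follows essentially the same route as the paper: the same index set $\Sigma$ of countable, $\s$-closed, weakly completely regular subfamilies of $\mathcal{B}$, the same quotient spaces via Theorem~\ref{metr}, and the same reduction of openness to Lemma~\ref{KPV} via Lemma~\ref{d-open}, with the FNS witness $\s(W')$ supplying the finite separating family. The only cosmetic differences are that the paper verifies openness of the projections $q_{\mathcal{P}}\colon X\to X/\mathcal{P}$ (from which openness of the bonding maps is immediate) rather than of the bonding maps directly, and in the key separation step it applies FNS first and then uses weak complete regularity of $\bigcap\mathcal{R}$ to land back in $\mathcal{P}$, whereas you first decompose the $\mathcal{T}$-sets via weak complete regularity, then apply FNS, and land in $\widehat\sigma$; both orders work.
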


\begin{proof}
Let $\mathcal{B}$ be a base consisting of cozero-sets of a space $X$ which has the $\FNS$ property. For every countable family $\mathcal{A}\subseteq \mathcal{B}$ there exists a countable weakly completely regular family $\mathcal{P}$ such that $\mathcal{A}\subseteq \mathcal{P} \subseteq \mathcal{B},$ and  $s(V)\subseteq \mathcal{P}$ for all $V\in\Pee.$  
Indeed, if $U_1,\ldots ,U_k\in \coZ(X)$, then $W=U_1\cap \ldots \cap U_k\in \coZ(X)$ and $W$ is $F_\sigma$ set. Therefore there exists a continuous function $f\colon X\to [0,1]$ such that
$$W=f^{-1}((0,1])=\bigcup\{F_n:n\in \omega\},$$
$$\text{where }F_n=f^{-1}([\frac{1}{n}, 1])\subseteq f^{-1}((\frac{1}{n+1}, 1])\subseteq f^{-1}([\frac{1}{n+1}, 1]).$$

Let $U_n=f^{-1}((\frac{1}{n}, 1])$, then $F_n\subseteq U_{n+1}$ for $n\in \omega$. Since the space $X$ is regular, for every $x\in F_n$ there exists $V_x\in \mathcal{B}$ such that $x\in V_x\subseteq \cl V_x \subseteq U_{n+1}$. Set $F_n$ is compact for $n\in \omega$, hence $F_n\subseteq \bigcup\{V_{x_i}:i\le k\}\subseteq \bigcup \{\cl V_{x_i}:i\le k\} =\cl (\bigcup \{V_{x_i}:i\le k\})\subseteq U_{n+1}$. Let $\mathcal{A}_n=\{V_{x_i}:i\le k\}.$ Similarly for $X\setminus U_{n+1}\subseteq X\setminus \cl \bigcup \mathcal{A}_n$ we get $\mathcal{B}_n\in [\mathcal{B}]^{<\omega}$ such that $X\setminus U_{n+1}\subseteq \bigcup \mathcal{B}_n \subseteq X\setminus \cl \bigcup \mathcal{A}_n$, then $\cl \bigcup \mathcal{A}_n \subseteq X\setminus \bigcup \mathcal{B}_n\subseteq U_{n+1}$. For every $n\in \omega$ we have $F_n\subseteq \bigcup \mathcal{A}_n \subseteq \cl \bigcup \mathcal{A}_n\subseteq  X\setminus \bigcup \mathcal{B}_n\subseteq U_{n+1}$, hence $W=\bigcup\{F_n:n\in \omega\}=\bigcup\{ \bigcup \mathcal{A}_n:n\in \omega\} =\bigcup \{  X\setminus \bigcup \mathcal{B}_n:n\in \omega\}$.

By Theorem \ref{metr} the space $X/\mathcal{P}$ is metrizable.
We shall show that $q\colon X\to X/\mathcal{P}$ is an open map. The map $q\colon X\to X/\mathcal{P}$ is closed map as a continuous map from a compact space to a Hausdorff space. According to the lemma \ref{d-open} it is sufficient to show that a map $q$ is d-open. By the lemma \ref{KPV} a map $q\colon X\to X/ \mathcal{P}$  is d-open if and only if a family $\mathcal{P}$ satisfies the condition \ref{2} of this lemma. Take a family $\mathcal{S} \subseteq \mathcal{P}$ and $x\notin \cl \bigcup \mathcal{S}.$ There exists $V\in \mathcal{B}$ such that $x\in V$ and $V\cap \bigcup \mathcal{S}=\emptyset$. Put $\Raa=\{V'':V\subseteq V'', V''\in s(V)\cap \mathcal{P}\}.$ Since  the base $\mathcal{B}$ has the FNS property  there are sets $V',U'\in s(V) \cap s(U)\subseteq \mathcal{P}$ such that $V\subseteq V'$, $U\subseteq U'$ and $V'\cap U'=\emptyset$ for every $U\in \mathcal{S}$. Therefore $\bigcap\Raa\cap  \bigcup \mathcal{S}=\emptyset$.
Since $\Pee$ is the weakly completely regular family there is a sequence $\{ \bigcup \mathcal{A}_n: n\in \omega\}\subseteq[\Pee]^{<\omega}$ such that  $\bigcap\Raa=\bigcup\{\bigcup\mathcal{A}_n:
n\in\omega\}.$ Since $x\in V\subseteq \bigcap\Raa$ there is $m\in \omega$ and  $W\in \mathcal{A}_m$ such that $x\in W\subseteq \bigcap\Raa$. Hence $W\cap \bigcup \mathcal{S}=\emptyset$ this proves that the map $q$ is d-open. 

Put
\begin{multline*}
\Sigma=\{\mathcal{P}\in[\mathcal{B}]^{<\omega} :\Pee\text{ is a  weakly completely regular family }\\
\text{ and }s(V)\subseteq \mathcal{P} \text{ for all } V\in\Pee\}.
\end{multline*}
The family $\Sigma$ ordered by inclusion is directed by the first part of the proof. If $\Pee\in\Sigma$ then a map $q:X\to X/\mathcal{P}$ is an open surjection onto the compact metrizable space $X/\mathcal{P}.$ If $\Pee\subseteq\Raa$, where $\Pee,\Raa\in\Sigma$, then one can define a map $\pi_{\Pee}^{\Raa}\colon X_{\Raa}\to X_{\Pee}$ such that $\pi_{\Pee}^{\Raa}\circ q_{\Raa}=q_{\Pee}$. Clearly $\pi_\Pee^\Raa$ is open. If $\{\Pee_n\colon n\in\omega\}$ is an increasing chain  in $\Sigma$ then the space $X_{\Pee}$,  is homeomorphic to $\varprojlim\{X_{\Pee_n},\omega \},$ where $\Pee=\bigcup\{\Pee_n\colon n\in\omega\}$.	Thus
$\{X_\Pee,\pi_\Pee^\Raa,\Sigma\}$ is a $\sigma$-complete inverse system,  all spaces $X_\Pee$ are compact and metrizable and all bonding maps $\pi_\Pee^\Raa$ are open. A map $h\colon X \to \liminv\{X/\mathcal{P},q^{\mathcal{P}_2}_{\mathcal{P}_1},\Sigma\}$ given by the formula
$$h(x)=\{[x]_{\mathcal{P}}\}_{\mathcal{P}\in \Sigma}$$
is the homeomorphism.
\end{proof}

\begin{theorem}{\label{ro}}
A family of all regular open sets in regular infinite space does not have the $\FNS$ property. 
\end{theorem}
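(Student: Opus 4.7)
The plan is to argue by contradiction: assume $\RO(X)$ has the $\FNS$ property via $s:\RO(X)\to[\RO(X)]^{<\omega}$, and derive a contradiction from an iterated pigeonhole on an infinite pairwise disjoint family.

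First I would establish a rigidity lemma. For any $V\in\RO(X)$, apply the $\FNS$ to the disjoint pair $(V,V^{\perp})$, where $V^{\perp}=\int(X\setminus V)$. The witnesses $W_V\supseteq V$ and $W_{V^{\perp}}\supseteq V^{\perp}$ must be disjoint regular opens. The disjointness $W_V\cap W_{V^{\perp}}=\emptyset$ together with $V^{\perp}\subseteq W_{V^{\perp}}$ gives $W_V\cap V^{\perp}=\emptyset$, hence $W_V\subseteq V^{\perp\perp}=V$, forcing $W_V=V$; symmetrically $W_{V^{\perp}}=V^{\perp}$. Therefore $V,V^{\perp}\in s(V)\cap s(V^{\perp})$ for every $V\in\RO(X)$.

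Second, using regularity and infinitude of $X$, I would inductively construct a discrete family $\{V_n:n\in\omega\}$ of nonempty pairwise disjoint regular opens with pairwise disjoint closures: pick distinct points, separate them by regularity, and replace each resulting open set by $\int\cl$ of it. Discreteness gives that $V_A:=\bigcup_{n\in A}V_n$ lies in $\RO(X)$ for every $A\subseteq\omega$ and $V_A\cap V_B=V_{A\cap B}$. Then for each pair $n\neq m$, the $\FNS$ supplies a unique witness pair $(W_n^m,W_m^n)\in s(V_n)^2\cap s(V_m)^2$ with $V_n\subseteq W_n^m$, $V_m\subseteq W_m^n$, $W_n^m\cap W_m^n=\emptyset$. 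For fixed $n$, pigeonholing in the finite alphabet $s(V_n)^2$ yields an infinite $M_n\subseteq\omega\setminus\{n\}$ and a fixed pair $(A_n,B_n)\in s(V_n)^2$ with $(W_n^m,W_m^n)=(A_n,B_n)$ for every $m\in M_n$; in particular $B_n\supseteq V_m$ for $m\in M_n$ and $A_n\cap B_n=\emptyset$. Diagonalizing produces an infinite sequence $(n_i)_{i\in\omega}$ with $n_j\in M_{n_i}$ for all $i<j$, so $B_{n_i}\supseteq V_{n_j}$ and $B_{n_i}\in s(V_{n_j})$ whenever $i<j$. The $B_{n_i}$'s are pairwise distinct: equality $B_{n_i}=B_{n_{i'}}$ with $i<i'$ would yield $V_{n_{i'}}\subseteq B_{n_i}=B_{n_{i'}}$ while $V_{n_{i'}}\cap B_{n_{i'}}=\emptyset$ (from $V_{n_{i'}}\subseteq A_{n_{i'}}$ and $A_{n_{i'}}\cap B_{n_{i'}}=\emptyset$), forcing $V_{n_{i'}}=\emptyset$.

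The main obstacle is extracting the terminal contradiction. The construction so far shows $s(V_{n_j})$ contains the $j$ distinct elements $B_{n_0},\ldots,B_{n_{j-1}}$, so $|s(V_{n_j})|\ge j$ grows unboundedly; but this is consistent with the $\FNS$ axiom, which only requires each $s$-image to be finite, not uniformly bounded. Closing the argument requires running the symmetric pigeonhole from $n_j$'s perspective as well, producing a pair $(A_{n_j},B_{n_j})\in s(V_{n_j})^2$ constant for $m$ in some infinite $M^{*}_{n_j}$. Uniqueness of the $\FNS$ witness for each unordered pair $\{n_i,n_j\}$ then forces the identifications $A_{n_j}=B_{n_i}$ and $B_{n_j}=A_{n_i}$ precisely when $n_i\in M^{*}_{n_j}$ (and $n_j\in M_{n_i}$). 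The delicate step is arranging, via a non-principal ultrafilter on $\omega$ or a canonical Ramsey-type refinement, a subsequence on which both pigeonhole classes agree; once secured, the identification $A_{n_j}=B_{n_i}$ combined with the already established distinctness of the $B_{n_i}$'s and the disjointness $A_{n_j}\cap B_{n_j}=\emptyset$ forces some $V_{n_k}$ to be simultaneously contained in, and disjoint from, the same regular open, yielding $V_{n_k}=\emptyset$ and the desired contradiction.
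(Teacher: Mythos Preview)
Your approach is entirely different from the paper's, and it has a genuine gap precisely where you flag it. The unbounded growth $|s(V_{n_j})|\ge j$ is, as you say, no contradiction. Your proposed repair requires arranging a subsequence on which the pigeonhole class is constant \emph{from both sides}: for all $i\ne j$ in the subsequence, $n_i\in M_{n_j}$. But the colour set $s(V_n)^2$ depends on $n$, so neither ordinary Ramsey nor an ultrafilter choice of $M_n\in\mathcal U$ obviously yields such a two-sided clique. For instance, nothing rules out $M_n=\{m:m>n\}$ being the $\mathcal U$-large level set for every $n$, in which case no infinite set has the required symmetry. You assert that ``once secured'' the contradiction follows --- and indeed three mutually compatible indices already force $A_{n_1}=B_{n_1}=\emptyset$ --- but you have not secured it, and I do not see how to. (A side remark: your claim that one can build a \emph{discrete} family $\{V_n\}$ in an arbitrary infinite regular space is false --- it fails in any countably compact space --- but you never actually use discreteness or the sets $V_A$, so this is cosmetic.)

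The paper's proof avoids combinatorics altogether. It passes to the Stone space $\Ult(\RO(X))$ and transports the $\FNS$ operator to its canonical base of clopen sets via $\overline{s}(\overline U)=\{\overline W:W\in s(U)\}$. Theorem~\ref{pog} then makes $\Ult(\RO(X))$ openly generated: a limit of a $\sigma$-complete inverse system of compact metrizable spaces with open bonding maps. But $\Ult(\RO(X))$ is extremally disconnected, and open images of extremally disconnected spaces are extremally disconnected, so every factor $X_\sigma$ is an extremally disconnected compact metric space --- hence finite. Since $|\RO(X)|>\omega$, one can run up a chain of factors with unbounded finite weight, and its supremum in the $\sigma$-complete index set gives a factor of infinite weight, a contradiction. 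The structural input (Theorem~\ref{pog} and the behaviour of extremal disconnectedness under open maps) replaces the Ramsey-type step you were unable to close.
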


\begin{proof}
Suppose that a regular infinite space $X$  has  a map $\s\colon \RO (X)\to [\RO (X)]^{<\omega}$ witnessing the $\FNS$ property for $\RO(X).$ Consider the Stone  space $\Ult(\RO (X))$ of the Boole'a algebra $\RO (X)$ with a topology generated by a base
$$\mathcal{B}=\{\overline{U}: U\in \RO (X)\},$$
where
$$\overline{U}=\{\mathcal{F} \in \Ult (\RO (X)):  U\in \mathcal{F} \}.$$
 Let  $\Ss \colon \mathcal{B}\to [\mathcal{B}]^{<\omega}$ be given by the formula
$$\Ss (\overline{U})=\{\overline{W}: W\in \s(U)\}.$$
Since  $\overline{U}\cap \overline{V}=\overline{U\cap V}$ for all $U, V\in \RO (X) ,$  the family $\mathcal{B}$ has the $\FNS$ property.

$\Ult(\RO (X))$ is the compact Hausdorff space with the $\FNS$ property. By  Theorem \ref{pog} $\Ult(\RO (X))$ is openly generated and $\Ult(\RO (X))=\liminv\{X_\sigma,\p^\sigma_\rho,\Sigma\},$ where $\Ess=\{X_\sigma,\p_\sigma^\rho,\Sigma\}$ is a $\sigma$-complete inverse system,  all spaces $X_\sigma$ are compact and metrizable and all bonding maps $\p_\sigma^\rho$ are open.   Since the space  $\Ult(\RO (X))$ is extremally disconnected and each $\p_\sigma$ is an open map,  every $X_\sigma$ is extremally disconnected. The space $X_\sigma$ is extremally disconnected and compact metric, therefore it has to be finite. 

Since  $X$ is the infinite regular space, $|\RO(X)|>\omega.$ For each $n\in \omega$ there exists $\sigma_n\in \Sigma$ such that  $\w(X_{\sigma_n})>n$ and $ \{\sigma_n : n\in \omega\}$ is a chain. Since $\Ess$ is the $\sigma$-complete inverse system $\w(X_\sigma)\geq \omega$ where $\sigma=\sup\{\sigma_n : n\in \omega\},$ a contradiction. 
\end{proof}

\begin{corollary}\label{top-not-FNS}
A topology of regular infinite space does not have the $\FNS$ property.
\end{corollary}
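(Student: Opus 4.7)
The plan is to reduce to Theorem \ref{ro} by transferring an FNS witness on the whole topology $\tau(X)$ to an FNS witness on $\RO(X)$ via the regularization operator $U\mapsto \int\cl U$.

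Suppose, for contradiction, that the topology $\tau(X)$ of a regular infinite space $X$ has the $\FNS$ property, as witnessed by some $\s\colon \tau(X)\to[\tau(X)]^{<\omega}$. I would define
\[
\s'\colon \RO(X)\to [\RO(X)]^{<\omega},\qquad \s'(U)=\{\int\cl W:W\in \s(U)\},
\]
and claim that $\s'$ witnesses $\FNS$ for the family $\RO(X)$, which contradicts Theorem \ref{ro}.

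To verify the claim, fix disjoint $U,V\in\RO(X)$. Since $U,V\in\tau(X)$ are disjoint, $\s$ supplies disjoint $W_U,W_V\in \s(U)\cap \s(V)$ with $U\subseteq W_U$ and $V\subseteq W_V$. Then $\int\cl W_U\in \s'(U)\cap \s'(V)$ and $\int\cl W_V\in \s'(U)\cap \s'(V)$. Monotonicity gives $U=\int\cl U\subseteq \int\cl W_U$ and likewise $V\subseteq \int\cl W_V$, so the inclusion part is free. The step that requires a short argument is disjointness: I would observe that for any two disjoint open sets $A,B\subseteq X$ one has $\int\cl A\cap \int\cl B=\emptyset$. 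Indeed, $B$ open and $A\cap B=\emptyset$ force $\cl A\cap B=\emptyset$; if $x\in \int\cl A\cap \cl B$, any open neighborhood of $x$ contained in $\cl A$ would meet $B$, contradicting $\cl A\cap B=\emptyset$. Hence $\int\cl A\cap \cl B=\emptyset$, and in particular $\int\cl A\cap \int\cl B=\emptyset$. Applying this to $A=W_U$ and $B=W_V$ gives the required disjointness.

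Thus $\s'$ is an $\FNS$-map for $\RO(X)$, contradicting Theorem \ref{ro}. The only non-routine ingredient is the short topological lemma that the regularization operator sends disjoint open pairs to disjoint regular open pairs; everything else is immediate from the definitions.
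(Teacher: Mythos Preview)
Your proof is correct and follows the same route as the paper: both reduce to Theorem~\ref{ro} by showing that an $\FNS$ witness on $\tau(X)$ induces one on $\RO(X)$. The paper simply invokes \cite[Proposition~2.2]{bk} for this transfer, whereas you spell out the underlying argument explicitly via the regularization $U\mapsto\int\cl U$ (together with the standard fact that disjoint open sets have disjoint regularizations).
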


\begin{pf}
Assume that the topology of regular infinite space $X$ has the $\FNS$ property. From the Proposition 2.2 \cite{bk} it follows that the family of all regular open sets $\RO(X)$ has the $\FNS$ property, a contardiction with Proposition \ref{ro}.   
\end{pf} 

From the above proposition follows that $\beta \mathbb{N}$ has the $\pi- \FNS$ property and does not have the $\FNS$ property.

\begin{example} Suppose that there exists a base $\Bee$ of $\beta \mathbb{N}$ with the $\FNS$ property. Then $\Bee'=\{\cl U:U\in\Bee\}$ is a base consisting of  clopen sets. It's easy to see that $\Bee'$ has the $\FNS$ property. By Theorem \ref{pog} there exists a $\sigma$-complete inverse system $\Ess=\{X_\sigma,\p_\sigma^\rho,\Sigma\}$ such that  all spaces $X_\sigma$ are compact and metrizable and all maps $\p_\sigma^\rho,\p_\sigma$ are open and $\liminv \Ess=\beta \mathbb{N},$ a contradiction because  $\beta \mathbb{N}$ is extremally disconnected (compare the proof of Theorem \ref{ro}). Since 
$\beta \mathbb{N}$ has a countable $\pi-$ base $\Bee$ closed under the complement it easy to define operator $\s:\Bee\to[\Bee]^{<\omega}$ which is witness on the $\FNS$ property (compare \cite[Proposition 5]{bk}).
\end{example}

\begin{corollary}
	If $X$ is a regular infinite space, then the base $\RO(X)$ does not have the $\FNS$ property.
\end{corollary}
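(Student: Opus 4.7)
The plan is short because this corollary is essentially a restatement of Theorem \ref{ro}. The definition of the $\FNS$ property given in the introduction is a condition on an arbitrary family $\Bee$ of open sets, independent of whether $\Bee$ happens to form a base for the topology. Theorem \ref{ro} asserts precisely that for a regular infinite space $X$ the family $\RO(X)$ does not admit any map $\s:\RO(X)\to[\RO(X)]^{<\omega}$ witnessing the $\FNS$ property. Since this failure is unconditional, it holds in particular in the (semi-regular) case where $\RO(X)$ is a base. So I would simply invoke Theorem \ref{ro}.

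Concretely, the one-line argument I would write is: assume, toward a contradiction, that $\RO(X)$ has the $\FNS$ property. Then the family of all regular open sets of the regular infinite space $X$ has the $\FNS$ property, contradicting Theorem \ref{ro}.

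Because the whole content is carried by Theorem \ref{ro}, there is no real obstacle here. If one had to reprove the result from scratch, the difficulty would be the same as in Theorem \ref{ro}: passing to the Stone space $\Ult(\RO(X))$ to get a compact Hausdorff space with the $\FNS$ property (via a cozero base since the sets $\overline{U}$ are clopen), applying Theorem \ref{pog} to present it as a $\sigma$-complete inverse limit of compact metric spaces with open bonding maps, and then using extremal disconnectedness of $\Ult(\RO(X))$ to force each factor $X_\sigma$ to be finite, which conflicts with $|\RO(X)|>\omega$ in a $\sigma$-complete system. But once Theorem \ref{ro} is available, the corollary is immediate.
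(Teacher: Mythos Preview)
Your proposal is correct and matches the paper's treatment: the paper states this corollary without proof, since it is indeed an immediate restatement of Theorem~\ref{ro} (the $\FNS$ property is defined for any family of open sets, and in a regular space $\RO(X)$ is a base). Your one-line reduction to Theorem~\ref{ro} is exactly what is intended.
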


Now we shall prove that if $X$ is a regular infinite space, then the bases $\RO(X)$ and the topology of $X$ do not have the FN property.

Consider a cardinal number $\kappa>\omega$ with the discrete topology. It easy to see that a base 
$\{\{\alpha\}:\alpha<\kappa\}$ has the $\FN$ property. One can check that 
$\up(\{\alpha\})=\{\{\alpha\}\}=\low(\{\alpha\})$ 
for all $\alpha<\kappa$ have the desired properties. Next theorem gvies a negative answer to the question: Does the base $\RO(X)$ ( topology) have 
the $\FN$ property, whenever there exists some base with $\FN$ property?

\begin{theorem}\label{no-FN}
A family of all regular open sets in regular infinite space does not have the $\FN$ property.
\end{theorem}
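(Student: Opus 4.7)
The plan is to reduce to Theorem~\ref{ro} by showing that if $\RO(X)$ has the $\FN$ property, then it also has the $\FNS$ property, yielding a contradiction. Suppose $\up,\low\colon \RO(X)\to[\RO(X)]^{<\omega}$ witness the $\FN$ property of the base $\RO(X)$. Recall that $\RO(X)$ is a complete Boolean algebra with complement $\neg U := X\setminus\cl U$, so $\neg W$ belongs to $\RO(X)$ whenever $W\in\RO(X)$ and is thus a legitimate input for $\up$ and $\low$.

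For each $U\in\RO(X)$ I would define
\[
s(U) \;=\; \up(U)\cup\low(U)\cup\low(\neg U)\cup\bigcup_{C\in\up(U)}\low(\neg C),
\]
a finite subset of $\RO(X)$. To see that $s$ witnesses the $\FNS$ property, take disjoint $U,V\in\RO(X)$. Then $U\subseteq\neg V$, so the $\FN$ property yields $C\in\up(U)\cap\low(\neg V)$ with $U\subseteq C\subseteq\neg V$. Applying $\FN$ again to $V\subseteq\neg C$ produces $D\in\up(V)\cap\low(\neg C)$ with $V\subseteq D\subseteq\neg C$. In particular $U\subseteq C$, $V\subseteq D$, and $C\cap D=\emptyset$. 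The memberships $C\in\up(U)\subseteq s(U)$, $C\in\low(\neg V)\subseteq s(V)$, and $D\in\up(V)\subseteq s(V)$ are immediate, while $D\in\low(\neg C)\subseteq s(U)$ holds because the last summand of $s(U)$ absorbs $\low(\neg C)$ for every $C\in\up(U)$. Hence $s$ witnesses the $\FNS$ property of $\RO(X)$, contradicting Theorem~\ref{ro}.

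The main obstacle I anticipate is the bookkeeping needed to force $D\in s(U)$: the upper separator $C$ is easy to place in $s(U)$, but the lower separator $D$ is produced only after $C$ has been chosen, so $s(U)$ must anticipate every possible $C\in\up(U)$. The nested union $\bigcup_{C\in\up(U)}\low(\neg C)$ is what accomplishes this, and its finiteness rests precisely on $\up(U)$ being finite, which is guaranteed by the $\FN$ property. Once this definition is pinned down, the remainder of the argument is just the routine double application of $\FN$ followed by the citation of Theorem~\ref{ro}.
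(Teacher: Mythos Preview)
Your argument is correct. The only point worth making explicit is the step ``$U\cap V=\emptyset$ implies $U\subseteq\neg V$'': since $U$ is open and $U\subseteq X\setminus V$, we get $U\subseteq\int(X\setminus V)=X\setminus\cl V=\neg V$; the analogous step yielding $V\subseteq\neg C$ works the same way. With that in hand the double application of $\FN$ and the bookkeeping for $s$ go through exactly as you describe.

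Your route, however, is genuinely different from the paper's. The paper gives a direct, self-contained combinatorial argument: it fixes an infinite maximal antichain $\Raa\subseteq\RO(X)$ and builds by recursion an increasing sequence $V_n\in\RO(X)$ together with finite ``forbidden'' families $\Pee_n\subseteq\Raa$, arranged so that every element of $\up(V_n)$ strictly above $V_n$ meets $\bigcup\Pee_{n+1}$; then $U=\int\cl\bigcup_n V_n$ satisfies $V_n\subseteq U\subseteq\bigcup(\Raa\setminus\Pee_n)$ for all $n$, forcing each $V_n$ into $\low(U)$ and contradicting finiteness. Your approach instead exploits the Boolean complement available in $\RO(X)$ to convert the interpolation property $\FN$ into the separation property $\FNS$, and then invokes Theorem~\ref{ro}. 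This is shorter and more conceptual---indeed it isolates the general Boolean-algebra fact that $\FN$ implies $\FNS$---but it inherits the machinery behind Theorem~\ref{ro}, which passes through Stone duality, Theorem~\ref{pog}, and the structure theory of openly generated compacta. The paper's argument, though combinatorially fussier, is entirely elementary.
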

\begin{pf}
Let $X$ be a regular infinite space. Suppose that  operators  
$\up,\low $ are witnesses on the $\FN$ property for a family of all regular open sets $\RO(X).$ 
Since the space $X$ is regular and infinite 
there is an infinite maximal family $\Raa\subseteq\RO(X)$ of pairwise disjoint set. 
 We may define by a straight forward recursion a sequence 
$\{V_n:n\in\omega\}\subseteq\RO(X)$ and a sequence of  
finite sets $\{\Pee_n\in[\Raa]^{<\omega}:n\in\omega\}$ such that 
\begin{enumerate}
	\item $V_n\subsetneq V_{n+1}$ and $\Pee_n\subsetneq\Pee_{n+1}$ for all $n\in\omega,$
	\item $V_n\subseteq \bigcup(\Raa\setminus\Pee_n)$  for all $n\in\omega,$
	\item $V_n\cap W=\emptyset$ or $W\subseteq V_n$ for all $W\in\Raa$ and $n\in\omega,$
	\item $\{W\in\Raa:W\subseteq V_n\}$ is finite for all $n\in\omega,$
	\item If $V_n\subseteq W\subseteq\bigcup(\Raa\setminus\Pee_n),$ 
	then $V_n\in\low(W)$, for each $W\in\Bee$ and $n\in\omega$.
\end{enumerate}
Suppose that we have just defined $\Pee_i\in [\Raa]^{<\omega}$ and $V_i\in\RO(X)$ with the property $(1)-(5)$ 
for $ i\leq n$. For each $W\in\up(V_n)\setminus\{V_n\}$ there is $U\in\Raa$ such that $W\cap U\ne\emptyset$ and
$U\cap V_n=\emptyset.$ Therefore there is a finite family $\Pee_{n+1}\subseteq \Raa$ such that $\Pee_n\subseteq\Pee_{n+1}$ and
$\bigcup\Pee_{n+1}\cap W\ne\emptyset$, whenever $W\in\up(V_n)\setminus\{V_n\}.$ Finaly we get $\{V_n:n\in\omega\}\subseteq\RO(X)$ and $\{\Pee_n\in[\Raa]^{<\omega}:n\in\omega\}$  which satisfy properties $(1)-(5).$ Let $\Pee=\bigcup\{\Pee_n:n\in\omega\}$ and $U=\int\cl\bigcup\{V_n:n\in\omega\}.$ For every $n\in\omega$ we get  $$V_n\subseteq \bigcup\{V_n:n\in\omega\}\subseteq U\subseteq \bigcup(\Raa\setminus\Pee)\subseteq \bigcup(\Raa\setminus\Pee_n).$$
 Hence  $V_n\in\low(U)$ for every $n\in\omega$ by property $(5)$, a contradiction.
\end{pf}
\begin{corollary}\label{top-not-FN}
	A topology of regular infinite space does not have the $\FN$ property.
\end{corollary}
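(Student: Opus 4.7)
The plan is to deduce this corollary directly from Theorem \ref{no-FN}, mirroring the argument that produced Corollary \ref{top-not-FNS} from Theorem \ref{ro}. The missing ingredient is the $\FN$-analogue of Proposition 2.2 of \cite{bk}: namely, that if the full topology of $X$ has the $\FN$ property, then so does the base $\RO(X)$ (which really is a base for a regular $X$, since given $x\in U$ open and $V$ with $x\in V\subseteq\cl V\subseteq U$ one has $x\in \int\cl V\subseteq U$ and $\int\cl V\in\RO(X)$). Once that inheritance is in place, Theorem \ref{no-FN} closes the argument.

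To transport the $\FN$ property, let $\up,\low\colon\tau(X)\to[\tau(X)]^{<\omega}$ witness the $\FN$ property of the topology, and for each $V\in\RO(X)$ set
\[
\up'(V)=\{\int\cl W : W\in\up(V)\},\qquad \low'(V)=\{\int\cl W : W\in\low(V)\}.
\]
These are finite subfamilies of $\RO(X)$. The verification rests on two standard facts about regular open sets: $V=\int\cl V$ whenever $V\in\RO(X)$, and $A\subseteq B$ implies $\int\cl A\subseteq\int\cl B$. From these, $W\in\up(V)$ gives $V=\int\cl V\subseteq\int\cl W$, and $W\in\low(V)$ gives $\int\cl W\subseteq\int\cl V=V$, so $\up'$ and $\low'$ respect the required containments.

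For the interpolation condition, suppose $V\subseteq V'$ in $\RO(X)$. Since $V,V'\in\tau(X)$, there exists $U\in\up(V)\cap\low(V')$ with $V\subseteq U\subseteq V'$, and then $\int\cl U$ belongs to $\up'(V)\cap\low'(V')$ by the same monotonicity. So $\RO(X)$ has the $\FN$ property, contradicting Theorem \ref{no-FN}.

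No serious obstacle is expected: the whole argument is a single-step transport of the witnesses along the regularization map $U\mapsto\int\cl U$ from $\tau(X)$ onto $\RO(X)$. The only subtlety is making sure each $\up'(V)$ actually lies in $\{U\in\RO(X): V\subseteq U\}$ (and dually for $\low'(V)$), which is precisely what the idempotence $V=\int\cl V$ delivers.
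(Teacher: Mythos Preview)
Your proof is correct and follows essentially the same approach as the paper: transport the $\FN$ witnesses from $\tau(X)$ to $\RO(X)$ via the regularization $W\mapsto\int\cl W$, then invoke Theorem~\ref{no-FN}. Your write-up is in fact more careful than the paper's, spelling out why $\up'(V)\subseteq\{U\in\RO(X):V\subseteq U\}$ and why the interpolation condition survives.
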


\begin{pf}
	Assume that the topology of regular infinite space $X$ has the $\FN$ property. Suppose that   operators  
	$\up,\low $ are witnesses on the $\FN$ property for a family of all open sets. Put $$\up_{reg}(U)=\{\int\cl W: W\in\up(U)\}
	\text{ and }\low_{reg}(U)=\{\int\cl W: W\in\low(U)\},$$
	for every $U\in\RO(X).$  The operators  
	$\up_{reg},\low_{reg} $ are witnesses on the $\FN$ property for a family of all regular open sets, a contradition with 
	Theorem \ref{top-not-FN}.   
\end{pf} 

In the paper \cite{bk} we have proved that if there exists a base $\Bee$ closed under the finite intersections with 
the FNS property, then one can enlarge the base $\Bee$ to a base $\Bee'$ with FNS that contains both families 
$\Bee$ and $\CO(X)$.
Now we can strengthen the mentioned result.

\begin{theorem}
	If  a 0-dimensional compact space $X$ has the $\FNS$ property for some base consisting of cozero-sets, then  
	$\CO(X)$ has the $\FNS$ property.
\end{theorem}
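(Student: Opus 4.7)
The plan is to combine Theorem \ref{pog} of this paper with the Heindorf--Shapiro characterization cited in the introduction. Since the only hypothesis on the base consisting of cozero-sets is exactly what Theorem \ref{pog} consumes, the conclusion there is immediately available: $X$ must be openly generated. The remaining task is to translate the property ``openly generated'' into the $\FNS$ property for the specific base $\CO(X)$ of clopen sets.

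First I would invoke Theorem \ref{pog} directly: as $X$ is compact Hausdorff and admits a base of cozero-sets with the $\FNS$ property, $X$ is openly generated, i.e.\ $X\cong \liminv\{X_\sigma,p^{\sigma'}_\sigma,\Sigma\}$ with $X_\sigma$ compact metrizable, $p^{\sigma'}_\sigma$ open surjections, and the system $\sigma$-complete and continuous. Second, since $X$ is additionally $0$-dimensional, each quotient space $X_\sigma$ is a continuous open image of the $0$-dimensional compactum $X$; hence $X_\sigma$ is itself $0$-dimensional (openness transfers clopen preimages to clopen sets, and one can factor through a metrizable $0$-dimensional compactum). Thus $X$ is a $\sigma$-complete continuous inverse limit of $0$-dimensional compact metrizable spaces with open bonding maps -- precisely the hypothesis of Heindorf--Shapiro (\cite{npba}), which guarantees that $\CO(X)$ has the $\FNS$ property. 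Alternatively, one can quote the Heindorf--Shapiro result directly in the form stated in the introduction: openly generated $0$-dimensional compact spaces are exactly those whose clopen algebras satisfy FNS.

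The only potentially subtle point will be arguing that openness of the bonding maps at the level of $X$ gives that the factor spaces $X_\sigma$ inherit $0$-dimensionality from $X$, so that the Heindorf--Shapiro theorem actually applies. This is the main obstacle, but it is mild: clopen subsets of $X$ project under the open surjections $p_\sigma\colon X\to X_\sigma$ to clopen subsets of $X_\sigma$ (as images of clopen sets under open closed maps), and these generate the topology of $X_\sigma$ because $\CO(X)$ is a base of $X$ and the system is continuous. Once this is verified, the composition of the two results closes the proof.
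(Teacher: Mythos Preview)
Your proposal is correct and follows essentially the same route as the paper: invoke Theorem~\ref{pog} to obtain that $X$ is openly generated, observe that the factor spaces may be taken $0$-dimensional, and then apply the Heindorf--Shapiro characterization \cite[Theorem~2.2.3]{npba} (also cited as \cite[Theorem~21]{bk}) to conclude that $\CO(X)$ has the $\FNS$ property. The paper simply states the $0$-dimensionality of the factors as a ``without loss of generality'' step, whereas you spell out the justification via open--closed images of clopen sets; otherwise the arguments coincide.
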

\begin{pf}
By Theorem \ref{pog} $X$ is openly genereted space, i.e. $\Ess=\{X_\Pee,\pi_\Pee^\Raa,\Sigma\}$ is a $\sigma$-complete
 inverse system,  all spaces $X_\Pee$ are compact and metrizable and all bonding maps $\pi_\Pee^\Raa$ are open and
  $X=\liminv\Ess$. We may assume without loss of generality that	$X_\Pee$ are 0-dimensional. Applying Heindorf 
  and Shapiro's result \cite[Theorem 2.2.3]{npba} ( see also \cite[Theorem 21]{bk}) we get an operator which witnesses 
  the FNS property for the family $\CO(X)$.
\end{pf}
The open-open game was introduced by P. Daniels, K. Kunen  and H.~Zhou \cite{dkz} for an arbitrary topological space $X$.
 Two players take countably many turns. A round consists of player I choosing a nonempty set $U$ and player II choosing a
  nonempty $V$ with
$V\subseteq U$.
Player I wins if the union of II's open sets is dense in $X$, otherwise player II wins. Denote this game by $G(X)$.  
Consider the following game $G_\omega(X)$ on a topological space $X$. At the $n$-th round Player I chooses a  finite 
family $\Cee_n$ of  open non-empty open sets. Then Player II chooses a finite family $\Dee_n$ of non-empty open  
subsets of $X$ such that for each $U \in \Cee_n$ there exists $V \in \Dee_n$ with $V \subseteq U $. 
Player I wins if the union of II's open sets is dense in $X$, otherwise player II wins. It's well known that  
the game $G(X)$ is equvalent to the game $G_\omega(X)$ (see \cite{dkz}). We say that Player I has a winning strategy 
in the game $G_\omega(X)$ whenever  there exists a function  
$$( \Dee_0,\Dee_1 , \ldots, \Dee_n) \mapsto \sigma ( \Dee_0,\Dee_1 , \ldots, \Dee_n), $$ 
where all families $\Dee_n$ and $\sigma ( \Dee_0,\Dee_1 , \ldots, \Dee_n)$ are finite and  
consists of non-empty open sets such that for each game
$$\sigma(\varnothing),\Dee_0,\sigma(\Dee_0),\Dee_1,\sigma(\Dee_0,\Dee_1),\Dee_2,\ldots,
\Dee_n,\sigma(\Dee_0,\ldots,\Dee_n),\Dee_{n+1},\ldots$$
the union  $\bigcup_{n\geq 0}\bigcup\Dee_n$ is dense in $X$. Assume that $\Bee$ is an $\pi-$ base, 
then  I Player has a winning strategy in the game $G(X)$ if and only if there is a winning strategy  
defined on $\pi-$ base $\Bee$ ( see \cite{dkz}). 

\begin{theorem}\label{pi-FNS-open}
	In every compact Hausdorff space with  the $\pi- \FNS$ property I Player has a winning strategy in open-open game.
\end{theorem}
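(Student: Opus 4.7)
The plan is to construct an explicit winning strategy for Player I in $G_\omega(X)$ using the $\FNS$ selector $s\colon\Bee\to[\Bee]^{<\omega}$ of the $\pi$-base $\Bee$ witnessing the $\pi-\FNS$ property. By the equivalence $G(X)\equiv G_\omega(X)$ and the fact that winning strategies may be realized on any $\pi$-base (both from \cite{dkz}, cited in the preceding paragraph), this suffices, and moreover we may restrict both players to play from $\Bee$.

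The proposed strategy $\sigma$ is defined by fixing $U_0\in\Bee$, setting $\sigma(\varnothing)=\{U_0\}$, and for $n\geq 1$,
$$\sigma(\Dee_0,\ldots,\Dee_{n-1})=\bigcup\{s(U):U\in H_n\},$$
where $H_n$ is the union of all families (from either player) played in rounds $0,\ldots,n-1$. Each $\sigma(\Dee_0,\ldots,\Dee_{n-1})$ is a finite subfamily of $\Bee$, so $\sigma$ is a legitimate strategy, and against any play of Player II the set $H_\infty=\bigcup_n H_n$ is countable and $s$-closed; moreover every $W\in H_\infty$ played by Player I is refined by Player II to some $V\in\mathcal{K}:=\bigcup_n\Dee_n$ with $V\subseteq W$.

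To verify $\sigma$ is winning I would argue by contradiction: suppose a play produces $V^*\in\Bee$ with $V^*\cap\bigcup\mathcal{K}=\emptyset$. First, $V^*\notin H_\infty$, because otherwise $V^*$ would be in some $\sigma$-family and Player II would be forced to play a refinement $V\subseteq V^*$ with $V\in\mathcal{K}$, contradicting $V^*\cap\mathcal{K}=\emptyset$. Next, for every $V\in\mathcal{K}$ apply $\FNS$ to the disjoint pair $\{V^*,V\}$: there exist disjoint $W_{V^*}^V,W_V\in s(V^*)\cap s(V)$ with $V^*\subseteq W_{V^*}^V$ and $V\subseteq W_V$. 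Since $s(V^*)$ is finite, pigeonhole yields a fixed $W^*\in\up(V^*):=\{W\in s(V^*):V^*\subseteq W\}$ that equals $W_{V^*}^V$ for infinitely many $V\in\mathcal{K}$. For each such $V$ we have $W^*\in s(V)\subseteq H_\infty$, so Player I plays $W^*$ and Player II refines it with some $V''\in\mathcal{K}$, $V''\subseteq W^*$. The key lemma, immediate from $\FNS$ (if $W_{V^*}^{V''}=W^*$ then $V''\subseteq W_{V''}$ is disjoint from $W^*$, while $V''\subseteq W^*$ forces $V''=\varnothing$), shows $W_{V^*}^{V''}\neq W^*$. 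Iterating this step together with the finiteness of $\up(V^*)$ exhausts the pool of candidates for $W_{V^*}^{V^{(k)}}$ and produces the contradiction.

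The main obstacle is to make the iteration strictly monotone: the lemma excludes only the single current witness, and one must prevent the chain of witnesses from oscillating within $\up(V^*)$. My plan to overcome this is to pass, following techniques from \cite{bk}, to the finite-intersection closure of $\Bee$ while preserving the $\FNS$ property, and to enhance $\sigma$ by also playing the $s$-images of these intersections; this drives Player II's refinements into progressively smaller subsets of intersections of elements of $\up(V^*)$, and the lemma applied once more to a refinement contained in $\bigcap\up(V^*)$ supplies the contradiction. As a backup argument, the ccc consequence of $\FNS$ from \cite{bk} allows one to turn the pigeonhole-produced infinite family into an antichain and so obtain a contradiction directly from ccc.
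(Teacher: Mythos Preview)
Your proposal has a genuine gap, and in fact you have already put your finger on it: with the strategy you describe (playing only the sets in $\bigcup\{s(U):U\in H_n\}$), the contradiction argument does not close. Each separating witness $W\in \up(V^*)$ was indeed played by Player~I and refined by some $V''\in\mathcal K$, but all you learn is that $W$ meets $\mathcal K$; you do not know that $V''$ lands inside $\bigcap\up(V^*)$ (or inside the smaller set $\bigcap\Raa$ of witnesses that actually separate $V^*$ from the various $U$'s). Your ``iteration'' only rules out one witness at a time and, as you note, nothing prevents oscillation among the finitely many elements of $\up(V^*)$. The two fixes you sketch do not repair this: closing $\Bee$ under finite intersections and playing $s$-images of those intersections still does not force Player~II to refine \emph{into} $\bigcap\Raa$, because $\bigcap\Raa$ itself is never guaranteed to be played (the elements of $\Raa$ appear in $H_\infty$, but their intersection need not); and the ccc backup fails because the pigeonholed family of $V$'s sharing a common $W^*$ has no reason to be an antichain.

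What the paper does, and what you are missing, is a single additional idea that makes the contradiction immediate: at stage $n$ Player~I does not merely play the sets in $\Aaa_n=\bigcup\{s(W):W\in\Dee_0\cup\ldots\cup\Dee_n\}$, but for \emph{every finite} $\Raa\subseteq\Aaa_n$ with $\bigcap\Raa\neq\emptyset$ chooses a $\pi$-base element $V_\Raa\subseteq\bigcap\Raa$ and plays all these $V_\Raa$. In the contradiction, the finite set $\Raa=\{W\in s(V^*):V^*\subseteq W\text{ and }W\cap U=\emptyset\text{ for some }U\in\mathcal K\}$ satisfies $\bigcap\Raa\cap\bigcup\mathcal K=\emptyset$ (since every $U\in\mathcal K$ is separated from $V^*$ by some $W\in\Raa$) and $\Raa\subseteq\Aaa_n$ for some $n$ (each $W\in\Raa$ lies in $s(U)$ for some $U\in\mathcal K$). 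Hence $V_\Raa$ was played, and Player~II's response lies in $\mathcal K\cap\bigcap\Raa=\emptyset$ --- a one-step contradiction with no iteration needed. Replacing your $\sigma$ by this ``intersections'' strategy is exactly the missing ingredient.
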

\begin{pf}
	Assume that $X$ is a compact Hausdorff space there are $\pi-$ base $\Bee$ and $\s:\Bee\to[\Bee]^{<\omega}$ which witnessing the $\pi-\FNS$ property. We shall define a winning strategy $\sigma$ for I player in the game $G_\omega(X)$. 
Take any $U_0\in\Bee$ and put $\Cee_0=\sigma(\emptyset)=\{U_0\}$. Assume that we have just defined
 $$\sigma(\emptyset),\Dee_0,\ldots,\sigma(\Dee_0,\ldots,\Dee_{n-1}),\Dee_{n},$$ where   all families $\Dee_j$ and 
 $\sigma ( \Dee_0,\Dee_1 , \ldots, \Dee_j)$ are finite and  consists of non-empty open sets such for each 
 $U \in \sigma ( \Dee_0,\Dee_1 , \ldots, \Dee_j)$ there exists $V \in \Dee_{j+1}$ with $V \subseteq U .$ 
 Let $\Aaa_n=\{s(W):W\in \Dee_0\cup\Dee_1 \cup \ldots\cup \Dee_n\}.$ For each finite family $\Raa\subseteq\Aaa_n$ 
 with the non-empty intersection we choose $V_\Raa\in\Bee$ such that $V_\Raa\subseteq \bigcap\Raa.$ 
 Put $$\sigma(\Dee_0,\ldots,\Dee_{n})=\{V_\Raa\in\Bee:\Raa\in[\Aaa_n]^{<\omega}\text{ and } \bigcap\Raa\ne\emptyset\}.$$
  Suppose that $\bigcup_{n\in\omega}\bigcup\Dee_n$ is not dense in $X$. There is a set $V\in\Bee$ such that 
  $V\cap \bigcup_{n\in\omega}\bigcup\Dee_n=\emptyset$. For each $U\in\bigcup_{n\in\omega}\bigcup\Dee_n$ 
  there is $W\in s(V)\cap s(U)$ such that $W\cap U=\emptyset$ and $V\subseteq W.$ There is $n\in\omega$ such 
  that $$\Raa=\left\{W\in s(V):\exists(U\in \bigcup_{n\in\omega}\bigcup\Dee_n)\;W\cap U=\emptyset \text{ and } V\subseteq W\right\}
  \subseteq \Aaa_n.$$
  Hence there exists $V\in\Dee_{n+1}$ such that $V\subseteq \bigcap\Raa$, a contradiction with $\bigcap\Raa\cap\bigcup_{n\in\omega}\bigcup\Dee_n=\emptyset.$
\end{pf}

Similar theorem to the Theorem \ref{pog} is true for spaces with $\pi-\FNS$ property and skeletaly generated spaces.

\begin{theorem}\label{pi-FNS}
Every compact Hausdorff space with  the $\pi- \FNS$ property  is skeletally generated.
\end{theorem}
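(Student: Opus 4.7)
The plan is to combine Theorem \ref{pi-FNS-open}, which was just established in the paper, with the characterization of skeletally generated compact Hausdorff spaces via the open-open game due to Kucharski and Plewik in \cite{kp8}. The introduction to the paper explicitly advertises this route: \cite{kp8} asserts that a compact Hausdorff space $X$ is skeletally generated if and only if Player I has a winning strategy in the open-open game $G(X)$. Given that equivalence, the theorem reduces to a two-line deduction.

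Concretely, the proof I would write proceeds as follows. Let $X$ be a compact Hausdorff space with the $\pi$-$\FNS$ property, witnessed by a $\pi$-base $\Bee$ and a map $\s\colon\Bee\to[\Bee]^{<\omega}$. By Theorem \ref{pi-FNS-open}, Player I has a winning strategy in the open-open game $G_\omega(X)$, which by the result of Daniels--Kunen--Zhou \cite{dkz} recalled just before that theorem is equivalent to a winning strategy in $G(X)$. Invoking the characterization from \cite{kp8} then gives that $X$ is skeletally generated, finishing the argument.

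The main obstacle, if one wanted a self-contained proof rather than a citation, would be to adapt the inverse-system construction from the proof of Theorem \ref{pog} to the $\pi$-$\FNS$ setting. In the $\FNS$ case one forms countable weakly completely regular subfamilies $\Pee$ of the base closed under $\s$, takes the quotients $X/\Pee$, and uses Lemma \ref{d-open} together with Lemma \ref{KPV} to show that the quotient maps are open. For $\pi$-$\FNS$ the family $\Bee$ is only a $\pi$-base, so the naive quotient need not separate points and the bonding maps cannot generally be made open; the adaptation requires passing to quotients induced by the regular-open hulls of elements of $\Pee$ and then verifying that the bonding maps are skeletal instead of open. This is exactly the content of \cite{kp8}, which is why deferring to that paper is the cleanest route and why the theorem can be stated as an immediate corollary of Theorem \ref{pi-FNS-open}.
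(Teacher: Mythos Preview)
Your proposal is correct and follows exactly the same route as the paper: apply Theorem \ref{pi-FNS-open} to obtain a winning strategy for Player I in the open-open game, then invoke \cite[Theorem 12]{kp8} to conclude that $X$ is skeletally generated. The additional discussion about a self-contained adaptation of the inverse-system construction is extraneous but not incorrect.
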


\begin{proof}
Assume that $X$ is compact Hausdorff space with  the $\pi- \FNS$ property. By Theorem \ref{pi-FNS-open} I Player has a winning strategy. Using  \cite[Theorem 12]{kp8} we get $X$ is skeletally generated.
\end{proof}

\section{Developable spaces have the FN property }
 Let $\mathcal{U}$ be a cover of a space $X$ and $x\in X$. Recall \cite{eng} that a set $\St(x,\mathcal{U})=\bigcup \{U\in \mathcal{U}:x\in U\}$ is called a \textit{star of the point $x$ with respect to the cover $\mathcal{U}$}. A sequence $\{\mathcal{W}_n\}_{n\in \omega}$ of covers of a space $X$ is called a \textit{development} whenever for every point $x\in X$ and an open set $U\subseteq X$ such that $x\in U$ there exists $i \in \omega$ such that $\St(x,\mathcal{W}_i) \subseteq U$. A space which has a development is called \textit{developable}.

A family $\mathcal{U}=\{U_t:t\in T\}$ of sets  is called a \textit{point-finite} if for every $x\in X$ the set $\{t\in T:x\in U_t\}$ is finite. We say that a cover $\mathcal{U}=\{U_t:t\in T\}$ is a \textit{refinement} of a cover $\mathcal{V}=\{V_s:s\in S\}$ if for every $t\in T$ there exists $s\in S$ such that $U_t\subseteq V_s$. 

\begin{proposition}
A topological space having a development consisting of point-finite covers has $\FN$ property.
\end{proposition}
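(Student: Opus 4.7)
The plan is to build a base with the $\FN$ property after pre-processing the development so that scale and inclusion interact cleanly. First I would replace each $\mathcal{W}_n$ by the common refinement $\mathcal{W}'_n=\{W_0\cap\cdots\cap W_n:W_i\in\mathcal{W}_i\}$; this remains point-finite (since $|\mathcal{W}'_n(x)|\le\prod_{i\le n}|\mathcal{W}_i(x)|<\omega$) and a development (stars only shrink), and now $\mathcal{W}'_{n+1}$ refines $\mathcal{W}'_n$. Next I would thin $\mathcal{W}'_n$ to the subfamily $\mathcal{M}_n$ of its inclusion-maximal members. Point-finiteness forbids any infinite strictly increasing chain in $\mathcal{W}'_n$ (all terms would lie in the finite set $\mathcal{W}'_n(x)$ for any $x$ in the bottom term), so every element lies below some maximal element and $\mathcal{M}_n$ still covers $X$; it is point-finite as a subfamily, it is an antichain under inclusion, and refinement is preserved because each $W\in\mathcal{M}_{n+1}$ embeds into some $W^{0}\in\mathcal{W}'_n$ whose maximal extension belongs to $\mathcal{M}_n$.

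Take $\mathcal{B}=\bigcup_n\mathcal{M}_n$ as the base (it is a base: any open neighborhood of $x$ contains some $\St(x,\mathcal{M}_n)$, hence some $V\in\mathcal{M}_n$ with $x\in V$). For each $V\in\mathcal{B}$ fix $x_V\in V$ and set $n(V)=\min\{n:V\in\mathcal{M}_n\}$. I then propose
$$\up(V)=\{W\in\mathcal{B}:V\subseteq W\text{ and } n(W)\le n(V)\},\qquad \low(V)=\{V\}.$$
Finiteness of $\up(V)$ follows at once because every such $W$ contains $x_V$, so $\up(V)\subseteq\bigcup_{k\le n(V)}\{W\in\mathcal{M}_k:x_V\in W\}$, a finite union of finite sets.

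The verification of $\FN$ reduces to the key lemma: $V\subsetneq W$ in $\mathcal{B}$ forces $n(W)<n(V)$. Indeed, if $n(W)\ge n(V)$, iterated refinement produces $W'\in\mathcal{M}_{n(V)}$ with $W\subseteq W'$; then $V,W'\in\mathcal{M}_{n(V)}$ with $V\subseteq W\subseteq W'$, and the antichain property of $\mathcal{M}_{n(V)}$ forces $V=W'$, whence $W\subseteq V$, contradicting $V\subsetneq W$. Granting the lemma, for $V\subseteq W$ in $\mathcal{B}$ either $V=W$ (then $V\in\up(V)\cap\low(W)$) or $V\subsetneq W$ with $n(W)<n(V)$, so $W\in\up(V)\cap\low(W)=\{W\}$. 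The main obstacle is really the preprocessing: the naive base $\bigcup_n\mathcal{W}_n$ may admit pairs $V\subsetneq W$ with $n(V)<n(W)$, leaving no canonical intermediate to place in a finite set depending only on $V$; point-finiteness is precisely what makes both reductions above available and pins down the scale comparison in the key lemma.
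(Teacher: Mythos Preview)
Your proof is correct and follows essentially the same approach as the paper: pass to a refining point-finite development, take maximal elements at each level to get antichains, set $\low(V)=\{V\}$ and let $\up(V)$ collect the supersets of $V$ occurring at levels $\le n(V)$, then use the antichain property to force $n(W)<n(V)$ whenever $V\subsetneq W$. You are more explicit than the paper in two places---you actually construct the refining development via common refinements (the paper just asserts it ``without loss of generality'') and you verify that $\up(V)$ is finite via point-finiteness at a chosen point $x_V$ (the paper leaves this implicit)---but the architecture of the argument is the same.
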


\begin{proof}
Let $\{\mathcal{U}_n\}_{n\in \omega}$ be a a development  consisting of point-finite covers of  $X$. Without loss of generality it can be assumed that there exists a development $\{\mathcal{W}_n\}_{n\in \omega}$ consisting of point-finite covers such that the cover $\mathcal{W}_{n+1}$  is a refinement of the cover $\mathcal{W}_n$. 

For a family $\Aaa$ of sets we shall denote by $\Aaa^M$ subfamily of $\Aaa$ consisting of all maximal elements, i.e. of set $A\in\Aaa$ such that if $A\subseteq A'\in\Aaa$ then $A=A'$. Since each $\mathcal{W}_n$  is point-finite cover $\Wee^M_n$ is well defined and it is the point-finite cover.  Two distinct elements in $\mathcal{W}^M_n$ are incomparable by the inclusion. The sequence $\{\mathcal{W}_n^M\}_{n\in \omega}$ is the  development. Indeed, take a point $x\in X$ and a neighborhood $V$ of $x$.
Since $\{\mathcal{W}_n\}_{n\in \omega}$ is the development there is $n\in\omega$ such that $\St (x, \mathcal{W}_n)\subseteq V.$ If $x\in W\in\Wee^M_n\subseteq\Wee_n$ then $W\subseteq U.$ Therefore $\St (x, \mathcal{W}^M_n)\subseteq V.$
The family $\mathcal{B}=\bigcup \{\mathcal{W}^M_n : n\in \omega\}$ is a base for  the space $X$. 
For each $U \in \mathcal{B}$ put
$$\low(U)=\{U\} \text{ and } \up(U)=\{W\in \mathcal{W}^M_i: U\subseteq W, i\le \min\{n: U\in \mathcal{W}^M_n\}\}.$$

If $U\subseteq V$, then $ U\in \mathcal{W}^M_k$ and $V\in \mathcal{W}^M_n $ for $k<n$ or $n<k$. If $k<n$, then for $V$ there is $W\in \mathcal{W}^M_k$ such that $U\subseteq V\subseteq W$. Since elements of $\mathcal{W}^M_k$ are incomparable by the inclusion,  we have $U=V$. If $n<k$, then $V\in l(V)\cap u(U)$. 
\end{proof}
Since every metrizable space has a development consisting of point-finite covers \cite[Theorem 5.4.8]{eng}  we get the following
\begin{corollary}[\cite{bk}]
Metric spaces satisfy FN property.
\end{corollary}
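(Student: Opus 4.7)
The plan is to reduce the statement directly to the preceding Proposition: once we know that any metric space $(X,d)$ admits a development consisting of point-finite open covers, the Proposition furnishes a base with the $\FN$ property. So the only thing to verify is the existence of such a development; this is a classical fact (Engelking, Theorem 5.4.8), but I would sketch it as follows.

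For each $n\in\nat$, consider the open cover $\Uee_n=\{B(x,1/n):x\in X\}$ of $X$ by $1/n$-balls. By Stone's theorem every metric space is paracompact, so one can choose a locally finite open refinement $\Wee_n$ of $\Uee_n$. Since locally finite families are point-finite, each $\Wee_n$ is a point-finite open cover of $X$. To see that $\{\Wee_n\}_{n\in\omega}$ is a development, fix $x\in X$ and an open set $U\ni x$, take $n$ with $B(x,2/n)\subseteq U$, and note that any $W\in\Wee_n$ containing $x$ sits inside some ball $B(y,1/n)$ with $y\in B(x,1/n)$, whence $W\subseteq B(x,2/n)$. Taking the union over all such $W$ yields $\St(x,\Wee_n)\subseteq B(x,2/n)\subseteq U$, as required.

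The remaining work is automatic: apply the Proposition to $\{\Wee_n\}_{n\in\omega}$ to obtain the base $\Bee=\bigcup_{n\in\omega}\Wee_n^M$ together with the operators $\low$ and $\up$ witnessing the $\FN$ property. The main (and really only) obstacle is the passage to point-finiteness, which is handled by paracompactness; there is no further subtlety for the $\FN$ conclusion, since all the combinatorial work was already done in the proof of the Proposition.
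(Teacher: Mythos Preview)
Your proposal is correct and follows exactly the paper's approach: the corollary is deduced immediately from the preceding Proposition together with the classical fact (Engelking, Theorem~5.4.8) that every metrizable space has a development consisting of point-finite covers. Your sketch of that fact via Stone's paracompactness theorem is a sound elaboration of what the paper merely cites.
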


\section{$\FNS$ property and absolutes}

We will need a notion of a small image and some facts connected with it.

Let $f$ be a map from a space $X$ to a space $Y$. A \textit{small image} of a set $U\subseteq X$ under a map $f$ is called a set $f^\#(U)=\{y\in Y: f^{-1}(y)\subseteq U\}$.


\begin{lemma}\label{frd}
Let $f$ be a map from $X$ to $Y$, then small image of a set $U\subseteq X $ under the map $f$ is given by $f^\#(U)=Y\setminus f(X\setminus U)$. 
\end{lemma}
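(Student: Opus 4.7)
The plan is to prove the equality $f^{\#}(U)=Y\setminus f(X\setminus U)$ by a direct unpacking of the definition of the small image, verifying the two inclusions (or, more efficiently, chaining logical equivalences on membership).

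First I would fix an arbitrary $y\in Y$ and observe that by the very definition of the small image,
\[
y\in f^{\#}(U)\iff f^{-1}(y)\subseteq U.
\]
The condition on the right can be rephrased: $f^{-1}(y)\subseteq U$ means that no element of $f^{-1}(y)$ belongs to $X\setminus U$, i.e.\ there is no $x\in X\setminus U$ with $f(x)=y$. This last statement is precisely $y\notin f(X\setminus U)$, which is equivalent to $y\in Y\setminus f(X\setminus U)$.

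Chaining these equivalences gives $y\in f^{\#}(U)\iff y\in Y\setminus f(X\setminus U)$ for every $y\in Y$, which is the desired set equality. I do not anticipate any real obstacle here — the statement is a set-theoretic identity whose proof consists of unfolding one definition and taking a contrapositive on the existential quantifier hidden inside $f(X\setminus U)$; no topology or any property of $f$ (continuity, surjectivity, etc.) is used.
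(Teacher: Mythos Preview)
Your proof is correct and follows essentially the same approach as the paper: both arguments reduce to unfolding the definition of $f^{\#}(U)$ and observing that $f^{-1}(y)\subseteq U$ is equivalent to $y\notin f(X\setminus U)$. The only cosmetic difference is that the paper writes out the two inclusions separately, whereas you chain the equivalences in a single line; no substantive idea differs.
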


\begin{proof}
We will show that $Y\setminus f(X\setminus U)=\{y\in Y: f^{-1}(y)\subseteq U\}$ for every set $U\subseteq X$.

Let $y\in Y\setminus f(X\setminus U)$, then $y\in Y\text{ and for every } x\in X \text{ such that } y=f(x) \text{ we have }x\notin X\setminus U$, hence $y\in Y \text{ and } f^{-1}(y)\subseteq U$.

Let $y\in Y$ and $f^{-1}(y)\subseteq U$, then $f^{-1}(y)\cap (X\setminus U)=\emptyset$. Hence $f\big(f^{-1}(y)\cap (X\setminus U)\big)=\{y\}\cap f(X\setminus U)=\emptyset$ and $\{y\}\subseteq Y\setminus f(X\setminus U)$.
\end{proof}

\begin{lemma}\label{f1}
Let $f\colon Z\to X$ be an irreducible map and $U,V\subseteq Z$ be open sets, then $U \cap V=\emptyset$ if and only if $f^\#(U) \cap f^\#(V)=\emptyset$. 
\end{lemma}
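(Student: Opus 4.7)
The plan is to reduce both directions to a single identity: for arbitrary subsets $U,V\subseteq Z$,
\[
 f^\#(U)\cap f^\#(V) = f^\#(U\cap V).
\]
This follows directly from the definition, since $y\in f^\#(U)\cap f^\#(V)$ iff $f^{-1}(y)\subseteq U$ and $f^{-1}(y)\subseteq V$, iff $f^{-1}(y)\subseteq U\cap V$. So the lemma becomes the claim that, for an open set $W\subseteq Z$, one has $f^\#(W)=\emptyset$ iff $W=\emptyset$, applied with $W=U\cap V$.

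For the forward direction, I would observe that if $U\cap V=\emptyset$ then $f^\#(U\cap V)=f^\#(\emptyset)=\{y\in X:f^{-1}(y)\subseteq\emptyset\}=\emptyset$, the last equality holding because an irreducible map is by definition surjective, so every fiber $f^{-1}(y)$ is nonempty. Combined with the identity above, this yields $f^\#(U)\cap f^\#(V)=\emptyset$.

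For the converse, I would argue by contrapositive: if $U\cap V\neq\emptyset$, then $U\cap V$ is a nonempty open subset of $Z$, so $F:=Z\setminus(U\cap V)$ is a proper closed subset of $Z$. Irreducibility of $f$ then gives $f(F)\neq X$. Invoking Lemma \ref{frd}, $f^\#(U\cap V)=X\setminus f(F)\neq\emptyset$, and by the identity $f^\#(U)\cap f^\#(V)=f^\#(U\cap V)\neq\emptyset$.

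There is no real obstacle here; the whole content is packaged in the identity $f^\#(U)\cap f^\#(V)=f^\#(U\cap V)$ together with the characterisation of irreducibility as ``every nonempty open set has nonempty small image'', which is immediate from Lemma \ref{frd}. The only thing to be careful about is invoking surjectivity in the forward direction, which is built into the notion of an irreducible map being used in the paper.
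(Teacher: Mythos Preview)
Your proof is correct and follows essentially the same approach as the paper's. The only difference is organisational: you first isolate the identity $f^\#(U)\cap f^\#(V)=f^\#(U\cap V)$ and then reduce both directions to the statement that a nonempty open set has nonempty small image, whereas the paper argues each direction by contradiction using only the inclusion $f^\#(U\cap V)\subseteq f^\#(U)\cap f^\#(V)$ together with Lemma~\ref{frd}; the underlying ideas (surjectivity for the forward direction, irreducibility via Lemma~\ref{frd} for the converse) are identical.
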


\begin{proof}
Let $U,V\subseteq Z$ and $U \cap V=\emptyset$. Suppose that $f^\#(U) \cap f^\#(V)\neq \emptyset$, then there exists $y\in Y$ such that $f^{-1}(y)\subseteq U\cap V$, this is a contradiction. 

Let  $f^\#(U) \cap f^\#(V)=\emptyset$. Suppose that $U \cap V\ne \emptyset$, then we get $\emptyset \ne X\setminus f\big( Z\setminus (U\cap V)\big)=f^\#(U\cap V)\subseteq f^\#(U) \cap f^\#(V)$ and this contradiction ends the proof.
\end{proof}

\begin{lemma}\label{f2}
Let $f$ be a map from $Z$ onto $X$. For every set $U\subseteq Z$ we have $f^\#(U) \subseteq f(U) $.
\end{lemma}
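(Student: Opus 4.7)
The plan is to chase definitions. Fix an arbitrary $y \in f^{\#}(U)$; by the definition of the small image, this means $f^{-1}(y) \subseteq U$. Since $f$ is onto, the fiber $f^{-1}(y)$ is nonempty, so we may pick some $x \in f^{-1}(y)$. Then $x \in U$ and $y = f(x)$, which exhibits $y \in f(U)$. Since $y$ was arbitrary, $f^{\#}(U) \subseteq f(U)$.

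The only subtle point — and the reason surjectivity is assumed — is that without $f$ being onto, one could have $f^{-1}(y) = \emptyset \subseteq U$ for some $y \in X \setminus f(Z)$; such a $y$ would be vacuously in $f^{\#}(U)$ but not in $f(U)$. Surjectivity rules this out and the inclusion follows immediately. I could alternatively derive this from Lemma \ref{frd} by noting $f^{\#}(U) = X \setminus f(Z \setminus U)$ and using $f(Z) = X$ to write $X = f(U) \cup f(Z \setminus U)$, whence $X \setminus f(Z \setminus U) \subseteq f(U)$; but the direct argument above is shorter.
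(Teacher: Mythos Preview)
Your argument is correct and is essentially identical to the paper's own proof: both pick $y\in f^{\#}(U)$, use $f^{-1}(y)\subseteq U$, invoke surjectivity to find $x\in f^{-1}(y)$, and conclude $y=f(x)\in f(U)$. Your additional remarks on why surjectivity matters and the alternative via Lemma~\ref{frd} are correct extras not present in the paper.
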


\begin{proof}
Let us take any set $U\subseteq Z$ and $y\in f^\#(U)$, then $f^{-1}(y)\subseteq U$. Hence there exists $x\in f^{-1}(y)\subseteq U$ and $y=f(x)\in f(U)$.
\end{proof}

We say that a continuous, closed map $f$ from $X$ onto $Y$ is \textit{irreducible} if $Y$ is not the image of any proper closed subset of  $X$. The \textit{absolute} of a  space $X$ is the space which is mapped and irreducibly onto $X$, and is such that any irreducible inverse image of the space $X$ is homeomorphic to it. We say that topological spaces are \textit{coabsolute} if their absolutes are homeomorphic.

\begin{lemma}\label{ptb}
If $\mathcal{B}_X$ is a $\pi$--base for $X$ and $f\colon Z\to X$ is an irreducible and closed map, then $\{f^{-1}(B): B\in \mathcal{B}_X\}$ is a $\pi$--base for $Z$.
\end{lemma}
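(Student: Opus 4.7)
The plan is to verify the two defining conditions of a $\pi$-base for the family $\{f^{-1}(B):B\in\Bee_X\}$: that each member is nonempty and open, and that every nonempty open set of $Z$ contains some member. The key tool will be the small image $f^\#$ together with Lemma \ref{frd}, which allows us to turn openness statements about $X$ into containment statements about $Z$.

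First I would note that since $f$ is onto (by the definition of irreducible), $f^{-1}(B)$ is nonempty whenever $B$ is nonempty, and it is open by continuity of $f$. The main step is the second one: given a nonempty open $U\subseteq Z$, I need to find $B\in\Bee_X$ such that $f^{-1}(B)\subseteq U$. For this I would look at $f^\#(U)=X\setminus f(Z\setminus U)$, which is open in $X$ because $f$ is closed, and is \emph{nonempty} precisely because $f$ is irreducible: $Z\setminus U$ is a proper closed subset of $Z$, so $f(Z\setminus U)\neq X$. Then since $\Bee_X$ is a $\pi$-base of $X$, pick $B\in\Bee_X$ with $B\subseteq f^\#(U)$.

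It remains to check that $f^{-1}(B)\subseteq U$. This is immediate from the definition of $f^\#$: if $z\in f^{-1}(B)$ then $f(z)\in B\subseteq f^\#(U)$, so $f^{-1}(f(z))\subseteq U$ and in particular $z\in U$.

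No step strikes me as a serious obstacle; the only subtle point is recognizing that the combination of irreducibility and closedness of $f$ guarantees that $f^\#(U)$ is a nonempty open subset of $X$ whenever $U$ is nonempty and open in $Z$, which is exactly what lets us apply the $\pi$-base property of $\Bee_X$.
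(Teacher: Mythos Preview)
Your proof is correct and follows essentially the same approach as the paper: both consider, for a nonempty open $U\subseteq Z$, the set $X\setminus f(Z\setminus U)$ (which you call $f^\#(U)$), use closedness and irreducibility of $f$ to see it is nonempty open, pick $B\in\Bee_X$ inside it, and then verify $f^{-1}(B)\subseteq U$. The only cosmetic differences are that you invoke the $f^\#$ notation and Lemma~\ref{frd} explicitly and you also check that each $f^{-1}(B)$ is nonempty open, a point the paper leaves implicit.
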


\begin{proof}
We will show that for every open set $\emptyset\ne U\subseteq Z$ there exists a set $B\in \mathcal{B}_X$ such that $f^{-1}(B)\subseteq U.$ Let us take any nonempty, open set $U\subseteq Z$, then $Z\setminus U\ne Z$ and it is closed. Hence $X\setminus f(Z\setminus U)\ne \emptyset$ and it is open, then there exists an open set $B\in \mathcal{B}_X$ such that $B\subseteq X\setminus f(Z\setminus U)$. We get $\emptyset =B\cap f(Z\setminus U)=f(f^{-1}(B) \cap (Z\setminus U))$. Hence $f^{-1}(B) \cap (Z\setminus U)=\emptyset$, and $f^{-1}(B)\subseteq U$.
\end{proof}

\begin{theorem}\label{coabsolute}
Every space $Y$ which is co-absolute to a space $X$ with the $\pi-\FNS$ property has the $\pi-\FNS$ property.
\end{theorem}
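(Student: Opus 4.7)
The plan is to use the common absolute $Z$ of $X$ and $Y$, together with irreducible closed surjections $f\colon Z\to X$ and $g\colon Z\to Y$, as a bridge: first I would pull back the $\pi-\FNS$ witness from $X$ to $Z$ along $f$, then push it forward from $Z$ to $Y$ along $g$ using small images.

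For the pull-back step, let $\mathcal{B}_X$ together with $s\colon\mathcal{B}_X\to[\mathcal{B}_X]^{<\omega}$ witness the $\pi-\FNS$ property for $X$. By Lemma \ref{ptb} the family $\mathcal{B}_Z=\{f^{-1}(B):B\in\mathcal{B}_X\}$ is a $\pi$-base of $Z$, and I would define $s_Z(f^{-1}(B))=\{f^{-1}(B'):B'\in s(B)\}$. If $f^{-1}(U)\cap f^{-1}(V)=\emptyset$, surjectivity of $f$ forces $U\cap V=\emptyset$, so the $\pi-\FNS$ property of $X$ yields disjoint $W_U,W_V\in s(U)\cap s(V)$ with $U\subseteq W_U$ and $V\subseteq W_V$; their $f$-preimages witness the analogous property for $\mathcal{B}_Z$ and $s_Z$.

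For the push-forward step, put $\mathcal{B}_Y=\{g^\#(V):V\in\mathcal{B}_Z\}$. By Lemma \ref{frd} each $g^\#(V)=Y\setminus g(Z\setminus V)$ is open because $g$ is closed, and it is nonempty because $g$ is irreducible. To see that $\mathcal{B}_Y$ is a $\pi$-base of $Y$, take any nonempty open $W\subseteq Y$ and pick $V\in\mathcal{B}_Z$ with $V\subseteq g^{-1}(W)$; then any $y\in g^\#(V)$ satisfies $g^{-1}(y)\subseteq V\subseteq g^{-1}(W)$, so $y\in W$. For each $W\in\mathcal{B}_Y$ I would fix once and for all a representative $V_W\in\mathcal{B}_Z$ with $g^\#(V_W)=W$, and set $s_Y(W)=\{g^\#(V'):V'\in s_Z(V_W)\}$. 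Given disjoint $W_1,W_2\in\mathcal{B}_Y$, Lemma \ref{f1} forces $V_{W_1}\cap V_{W_2}=\emptyset$, so the $\pi-\FNS$ property of $Z$ supplies disjoint $V_1',V_2'\in s_Z(V_{W_1})\cap s_Z(V_{W_2})$ with $V_{W_i}\subseteq V_i'$; then $g^\#(V_1')$ and $g^\#(V_2')$ are disjoint by Lemma \ref{f1} again, lie in $s_Y(W_1)\cap s_Y(W_2)$ by construction, and contain $W_1$ and $W_2$ respectively by monotonicity of $g^\#$.

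The only really delicate point is recognising that $g^\#$ is the correct dual of $f^{-1}$ for this transfer: it sends open sets of $Z$ to open sets of $Y$ precisely because $g$ is closed, and Lemma \ref{f1} guarantees that disjointness is reflected in both directions precisely because $g$ is irreducible. Once these structural features are in place the $\FNS$ structure transports through the absolute without further difficulty, and the remainder of the argument is routine bookkeeping with the chosen representatives $V_W$.
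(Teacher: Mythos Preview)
Your argument is correct and follows essentially the same route as the paper's proof: transport the $\pi$-base through the common absolute by pulling back along the irreducible map to $X$ via preimages and pushing forward along the irreducible map to $Y$ via small images, invoking Lemmas~\ref{frd}, \ref{f1}, \ref{f2} and \ref{ptb} at the same points. The only differences are cosmetic: you factor the transfer into two explicit steps rather than working directly with the composite $g^\#\circ f^{-1}$, and you are a bit more careful than the paper in fixing representatives $V_W$ so that $s_Y$ is unambiguously defined.
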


\begin{proof}
Let a space $Z$ be an absolute of the space $X$ and $g\colon Z \to X$, $f\colon Z\to Y$ be irreducible, onto maps. Let $\mathcal{B}_X$ be a $\pi$-base for $X$ which has $\FNS$ property and $\s$ witnesses this property.

We will show that the family $\{f^\#g^{-1}(V):V\in \mathcal{B}_X\}$ is a $\pi$-base for $Y$. Let $U$ be an open set in $Y$. By Lemma \ref{ptb} there exists a set $V\in \mathcal{B}_X$ such that $g^{-1}(V)\subseteq f^{-1}(U)$. Hence by Lemma \ref{f2} $f^\#g^{-1}(V)\subseteq f^\#f^{-1}(U)\subseteq ff^{-1}(U)\subseteq U$.

We will show that the family $\{f^\#g^{-1}(V):V\in \mathcal{B}_X\}$ has $\FNS$ property. We define a map $\s_Z\colon\{f^\#g^{-1}(V):V\in \mathcal{B}_X\}\to  [\{f^\#g^{-1}(V):V\in \mathcal{B}_X\}]^{<\omega}$ by the formula
$$\s_Z(f^\#g^{-1}(U))=\{f^\#g^{-1}(W):W\in \s(U)\}.$$
Let $f^\#g^{-1}(V) \cap f^\#g^{-1}(U)=\emptyset$ for some $V, U\in \mathcal{B}_X$. By Lemma \ref{f1} it has to be $V\cap U=\emptyset$, then there exist disjoint sets $V',U'\in \s(U)\cap s(V)$. We have $ f^\#g^{-1}(V'),  f^\#g^{-1}(U')\in s(f^\#g^{-1}(V)) \cap \s(f^\#g^{-1}(U)) $, $f^\#g^{-1}(V)\subseteq f^\#g^{-1}(V')$ and $f^\#g^{-1}(U)\subseteq f^\#g^{-1}(U')$. By Lemma \ref{f1} we have $f^\#g^{-1}(V')\cap f^\#g^{-1}(U')=\emptyset$. This ends the proof.
\end{proof}
 Dugundji spaces were introduced in \cite{p}. Skeletally Dugundji spaces are skeletal analogue of Dugundji spaces, see \cite{kpv1}. 
We say that a compact space $X$ is Dugundji (skeletally Dugundji) whenever $X$ can be represented as the limit space of a continuous inverse system
$\displaystyle S=\{X_\alpha, p^{\beta}_\alpha, \alpha<\beta< w(X)\}$ such that $X_0$ is a compact metrizable  with open bonding (skeletal) maps and  each map $p^{\alpha+1}_\alpha$ has a metrizable kernel, see \cite{ha}. L. Shapiro \cite{sh} proved that every skeletally Dugundji spaces are co-absolute to a 0-dimensional Dugundji spaces. By Theorem 2.2.3 \cite{npba} ( see also Theorem 21 \cite{bk}) and Theorem \ref{coabsolute} every 0-dimensional skeletally Dugundji space has the $\pi-\FNS$ property.
  
\begin{question}
	Do openly generated spaces  have the $\FNS$ property?
\end{question}
\begin{question}
	Do skeletally generated spaces  have the $\pi-\FNS$ property?
\end{question}

\end{document}